\newtheorem{theorem}{Theorem}[section]
\newtheorem{lemma}[theorem]{Lemma}
\newtheorem{prop}[theorem]{Proposition}
\theoremstyle{definition}
\theoremstyle{remark}
\numberwithin{equation}{section}
\newcommand{\N}{\mathbb N}
\newcommand{\R}{\mathbb R}
\newcommand{\T}{\mathbb T}
\title[Common hypercyclic subspaces for $\mu D$ and $\mu T_a$]{Existence of common hypercyclic subspaces for the derivative operator and the translation operators}
\author[Q. Menet]{Quentin Menet}
\address{Quentin Menet, Univ. Artois, EA 2462, Laboratoire de Mathématiques de Lens (LML), F-62300 Lens, France}
\email{quentin.menet@univ-artois.fr}
\subjclass[2010]{Primary 47A16}
\keywords{Common hypercyclic vectors; Hypercyclic  subspaces}
\begin{document}
\begin{abstract}
We show that the non-zero multiples of the derivative operator and the non-zero multiples of non-trivial translation operators on the space of entire functions share a common hypercyclic subspace, \emph{i.e.} a closed infinite-dimensional subspace in which each non-zero vector has a dense orbit for each of these operators.
\end{abstract}
\maketitle
\section{Introduction}

Let $X$ be a separable infinite-dimensional Fréchet space and $T$ a continuous and linear operator on $X$. We say that $T$ is hypercyclic if there exists a vector $x\in X$ (also called hypercyclic) such that the orbit $\text{Orb}(x,T):=\{T^nx: n\ge 0\}$ is dense in $X$, and we denote by $\text{HC}(T)$ the set of hypercyclic vectors for $T$. 

If $\text{HC}(T)$ is non-empty, it is known that $\text{HC}(T)$ is a dense $G_{\delta}$-set~\cite{Bir} and that $\text{HC}(T)\cup\{0\}$ contains a dense infinite-dimensional subspace~\cite{Bou, Her}. One can wonder if $\text{HC}(T)\cup\{0\}$ is also spaceable, i.e. contains a closed infinite-dimensional subspace. It was proved in~\cite{Mon} that this is not the case in general: there exist some hypercyclic operators $T$ for which $\text{HC}(T)\cup\{0\}$ is not spaceable. If $\text{HC}(T)\cup\{0\}$ is spaceable, we will say that $T$ possesses a hypercyclic subspace. The interested reader can refer to two books \cite{Bay, Gro} for more information about hypercyclic operators and to the book \cite{Aro2} for more information about spaceability.

In this paper, we investigate the multiples of the derivative operator $D$ on $H(\mathbb{C})$ defined by $Df=f'$ and the multiples of the translation operators $T_a$ on~$H(\mathbb{C})$ defined by $T_af(\cdot)=f(\cdot+a)$. Recall that $H(\mathbb{C})$ is the space of entire functions on the whole complex plane endowed with the topology of uniform convergence on compact subsets which is induced by the sequence of norms $(p_j)_{j\ge 1}$ given  by
\[p_j(f)=\sup_{|z|<j}|f(z)|.\]
These operators are the first examples of hypercyclic operators~\cite{Bir2, Mac}. In fact, each non-zero multiple of the derivative operator~\cite{Shk, Men1} and each non-zero multiple of a non-trivial translation operator~\cite{Pet} possesses a hypercyclic subspace. One can therefore wonder if a non-zero multiple of the derivative operator and a non-zero multiple of a non-trivial translation operator possess a common hypercyclic subspace, \emph{i.e.} a closed infinite-dimensional subspace in which each non-zero vector is hypercyclic for each of these operators. A sufficient condition for the existence of a common hypercyclic subspace for a countable family can be found in~\cite{Aro1}.

\begin{theorem}[Criterion $M_0$ for countable families~\cite{Aro1}]\label{M0finite}
Let $X$ be a separable Fréchet space with a continuous norm and let $\{T_{\lambda}\}_{\lambda\in\Lambda}$ be a family of operators where $\Lambda$ is at most countable. Let $M_0$ be a closed infinite-dimensional subspace of $X$. If for every $\lambda\in \Lambda$, there exists an increasing sequence $(n_k)$ such that the operator $T_{\lambda}$ satisfies the Hypercyclicity Criterion along $(n_k)$ and such that for every $x\in M_0$, $T^{n_k}_{\lambda}x$ tends to $0$, then  the family $\{T_{\lambda}\}_{\lambda\in\Lambda}$ possesses a common hypercyclic subspace.
\end{theorem}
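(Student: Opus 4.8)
The plan is to view this as the countable-family version of the classical single-operator ``$M_0$-criterion'' for hypercyclic subspaces and to run every member of the family through one inductive construction of a single basic sequence. First I would fix a sequence of seminorms $(\|\cdot\|_q)_{q\ge1}$ defining the topology of $X$ with $\|\cdot\|:=\|\cdot\|_1$ a norm, a dense sequence $(z_p)_{p\ge1}$ in $X$, and an enumeration of $\Lambda$. For each $\lambda\in\Lambda$ I would record the data witnessing the Hypercyclicity Criterion along the given sequence $(n^{(\lambda)}_k)_k$, namely dense sets $X_{0,\lambda},Y_{0,\lambda}\subseteq X$ and maps $S^{(\lambda)}_k\colon Y_{0,\lambda}\to X$ with $T_\lambda^{n^{(\lambda)}_k}\to0$ on $X_{0,\lambda}$, $S^{(\lambda)}_k\to0$ on $Y_{0,\lambda}$, and $T_\lambda^{n^{(\lambda)}_k}S^{(\lambda)}_k\to\mathrm{id}$ on $Y_{0,\lambda}$, together with the extra hypothesis that $T_\lambda^{n^{(\lambda)}_k}x\to0$ for every $x\in M_0$. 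Since $X$ has a continuous norm and $M_0$ is closed and infinite-dimensional, $M_0$ contains a normalized basic sequence $(g_j)_{j\ge1}$, say of basis constant $c$.

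I would then take the common hypercyclic subspace to be $M:=\overline{\mathrm{span}}\{e_j:j\ge1\}$, where $(e_j)$ is a basic sequence of the form $e_j=g_j+h_j$ with each $h_j$ a norm-convergent series of ``approximate inverses'' $S^{(\mu)}_k(z_p)$ supplied by the criteria and with $\sum_j\|h_j\|_1$ as small as we like, so that $(e_j)$ stays basic with basis constant at most $2c$ (stability of basic sequences under small perturbations) and $M$ is closed and infinite-dimensional. Enumerating all quadruples $(\lambda,N,p,r)\in\Lambda\times\N\times\N\times\N$, I would determine the indices occurring in the series $h_j$, their coefficients, and the associated ``query times'' (taken from the appropriate sequence $(n^{(\lambda)}_k)_k$) by an induction that handles one quadruple per stage, arranged so that for every $(\lambda,N,p,r)$ there are arbitrarily large times $m\in(n^{(\lambda)}_k)_k$ with $\|T_\lambda^m e_N-z_p\|_r<1/r$ while $\sum_{j>N}\|T_\lambda^m e_j\|_r\to0$ as $m\to\infty$ along such times. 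Granting this, any non-zero $x=\sum_{j\ge N}a_je_j\in M$ with $a_N\neq0$ ($N$ minimal) is hypercyclic for every $T_\lambda$: given a target $z$, $\varepsilon>0$ and an index $q_0$, pick $p$ with $\|z_p-a_N^{-1}z\|_{q_0}$ small, $r\ge q_0$ large, and then, using $|a_j|\le2c\|x\|_1$ from the basic-sequence bound, a time $m$ large enough that $\sum_{j>N}|a_j|\,\|T_\lambda^m e_j\|_{q_0}<\varepsilon/2$; then $T_\lambda^m x=a_NT_\lambda^m e_N+\sum_{j>N}a_jT_\lambda^m e_j$ is within $\varepsilon$ of $z$ in $\|\cdot\|_{q_0}$.

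The inductive step handling $(\lambda,N,p,r)$ would, given that only finitely many series-terms are committed and finitely many query times reserved so far, append to $e_N$ a new term $c\,S^{(\lambda)}_k(z_p/c)$ with $k$ large and $|c|$ tiny so that its norm is small enough to keep $(e_j)$ basic and so that $T_\lambda^{m}$ with $m:=n^{(\lambda)}_k$ carries it close to $z_p$; then pick $m$ large enough that $T_\lambda^m g_N$ and the images under $T_\lambda^m$ of all already-committed terms of the $h_j$ with $j>N$ are negligible — this is where the hypothesis $T_\lambda^{n^{(\lambda)}_k}|_{M_0}\to0$ does its work, forcing the ``$g$-parts'' of all tails to decay simultaneously for every member of the family — and, in all later stages, commit the remaining series-terms with coefficients small relative to the finitely many operator norms $\|T_\lambda^m\|$ over the query times reserved so far, so that no later term ever spoils a secured approximation, tail estimate, or the basis constant. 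The main obstacle is exactly this global interlacing: one must guarantee that, for \emph{every} $\lambda$ and at \emph{every} query time reserved for $\lambda$, all the ``wrong'' terms — in particular the bump terms inserted to serve other quadruples, including other quadruples with the same leading index $N$ and the same operator $\lambda$ — are negligible. Arranging this seems to require, as a preliminary reduction, replacing the criterion data $X_{0,\lambda}$ by dense sets adapted to the whole family (so that an inserted bump term, once it has done its job at its own query time, is thereafter negligible along every later query time of every $T_\lambda$), and only then carrying out the interlaced induction; keeping all of this compatible with a single basic sequence shared by every $T_\lambda$, and with all the seminorm estimates needed in the Fréchet setting, is the technical heart of the proof.
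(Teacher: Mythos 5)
This theorem is not proved in the paper you are reading: it is quoted verbatim as a known result from Aron--B\`es--Le\'on-Saavedra--Peris \cite{Aro1}, so there is no ``paper's own proof'' to match against. Your sketch does follow the standard line of attack for $M_0$-type criteria (extract a normalized basic sequence $(g_j)$ from $M_0$ using the continuous norm, perturb it by small ``bump'' vectors built from the maps $S_k^{(\lambda)}$ of the Hypercyclicity Criterion, and interlace an induction over quadruples $(\lambda,N,p,r)$), and the reduction of the hypercyclicity of a general $x=\sum_{j\ge N}a_je_j$ to estimates on $T_\lambda^m e_N$ and on the tail $\sum_{j>N}a_jT_\lambda^m e_j$, with the coefficient bound $|a_j|\le 2c\|x\|_1$ coming from the basis constant, is correct.

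However, there is a genuine gap in the inductive step, and you yourself flag it without resolving it. You propose to ``pick $m$ large enough that $T_\lambda^m g_N$ and the images under $T_\lambda^m$ of all already-committed terms of the $h_j$ with $j>N$ are negligible.'' For the $g$-parts this is exactly what the hypothesis $T_\lambda^{n_k}|_{M_0}\to0$ buys you, but for the already-committed bump terms $c'S_{k'}^{(\mu)}(z_{p'}/c')$ it is not justified by anything in the statement: these vectors need not lie in $X_{0,\lambda}$ nor in $M_0$, so neither the Hypercyclicity Criterion for $T_\lambda$ nor the $M_0$ hypothesis gives $T_\lambda^{n_k}S_{k'}^{(\mu)}(\cdot)\to0$ as $k\to\infty$, and this is required even within a single operator (take $\mu=\lambda$, $k'<k$). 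Making the \emph{future} bumps tiny relative to the finitely many operator seminorms already reserved is fine, but that does nothing for the \emph{past} bumps at a \emph{new} query time. The closing suggestion --- ``replacing the criterion data $X_{0,\lambda}$ by dense sets adapted to the whole family'' so that each inserted bump, once used, is thereafter killed along every later query time of every $T_\lambda$ --- is indeed the right idea, but it is exactly the nontrivial content of the theorem and cannot be waved through as a ``preliminary reduction'': one must either show that for a countable family one can choose, from the HC-criterion data and $M_0$, bump vectors that are simultaneously annihilated in the limit by every $T_\lambda^{n_k^{(\lambda)}}$, or reorganize the induction so that the query time for a stage and the bumps that will ever be visible at that time are fixed together (as is done in the cited references). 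As written, the argument does not close, so the proposal should be regarded as an outline with the central estimate missing rather than a proof.
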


The difficulty in using this criterion relies on the existence of the subspace~$M_0$. We will see in~Section~\ref{DT} how we can succeed to construct such a subspace for the operators $\mu D$ and $T_a$ in order to deduce the existence of a common hypercyclic subspace for these two operators.

Thanks to Costakis and Sambarino~\cite{Cos}, we also know that the family of the non-zero multiples of the derivative operator $\{\mu D\}_{\mu\ne 0}$ possesses a dense $G_{\delta}$-set of common hypercyclic vectors and that the family of non-trivial translation operators $\{T_a\}_{a\in \mathbb{C}\backslash\{0\}}$ possesses a dense $G_{\delta}$-set of common hypercyclic vectors. In 2010, Shkarin~\cite{Shk2} improved this last result by showing that the family $\{\mu T_a\}_{a,\mu \in\mathbb{C}\backslash\{0\}}$ possesses a dense $G_{\delta}$-set of common hypercyclic vectors. The family $\{\mu D\}_{\mu\ne 0}\cup\{\mu T_a\}_{a,\mu\in\mathbb{C}\backslash\{0\}}$ thus possesses a dense $G_{\delta}$-set of common hypercyclic vectors.
 
One can therefore wonder if the above families possess a common hypercyclic subspace. Since these families are uncountable, we cannot use the previous criterion. However, there also exist versions of criterion $M_0$ for uncountable families parametrized by a one-dimensional set~\cite{Bay0, Bes}. In particular, it is shown in~\cite{Bes} that the uncountable family $\{\mu D\}_{\mu\ne 0}$ possesses a common hypercyclic subspace. Unfortunately, we cannot apply these criteria to the family $\{\mu T_a\}_{a,\mu\in\mathbb{C}\backslash\{0\}}$ and it is thus not known if the family $\{\mu T_a\}_{a,\mu\in\mathbb{C}\backslash\{0\}}$ possesses a common hypercyclic subspace. However, we can easily adapt Theorem 4.1 in \cite{Bes} in order to obtain a sufficient condition for the existence of common hypercyclic subspaces which can be applied to the two-dimensional family  $\{\mu T_a\}_{a,\mu\in \mathbb{C}\backslash\{0\}}$. Indeed, if we look at the proof of  this theorem, we can remark that we only need the following properties in order to construct a common hypercyclic subspace:

\begin{theorem}[Criterion $M_0$ for uncountable families]\label{M0gen}
Let $X$ be a Fréchet space with a continuous norm, let $Y$ be a separable Fréchet space and let $\{T_{k,\lambda}\}_{\lambda\in \Lambda}$ be a family of sequences of operators in $L(X,Y)$. Suppose that there exist chains   $(\Lambda^j_n)_{n\ge 1}$ of $\Lambda$ $(j=0,1,2)$  satisfying:
\begin{enumerate}
\item[(i)]\  for every $k,n\ge 1$, the family
$
\{  T_{k, \lambda} \}_{\lambda\in \Lambda_n^0} \ \mbox{ is equicontinuous;}
$
\
\item[(ii)]\  for every $n\ge 1$, there exists a dense subset $X_{n,0}$ of $X$ such that for every $x\in X_{n,0}$,
\[
T_{k, \lambda}x \xrightarrow[k\to \infty]{} 0\quad\text{uniformly on $\lambda\in \Lambda_n^1$;}
\]
\
\item[(iii)]\ for every $l,n\ge 1$, every $\varepsilon>0$, every $y\in Y$, every $K_0\ge 0$, there exist $x\in X$ and $K_1\ge K_0$ such that $p_l(x)<\varepsilon$ and such that for every $\lambda\in \Lambda_n^2$, there exists $k\in [K_0,K_1]$ such that
\[q_l(T_{k,\lambda}x-y)<\varepsilon\]
\item[(iv)]\ there exists an infinite-dimensional closed subspace $M_0$ such that for any $(\lambda, x) \in \Lambda\times M_0$, 
\[T_{k,\lambda}x\xrightarrow[k\to \infty]{} 0.\]
\end{enumerate}
Then $\{ (T_{n,\lambda})_{n\ge 0} \}_{\lambda\in \Lambda}$ has a common hypercyclic subspace.
\end{theorem}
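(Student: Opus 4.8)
The plan is to follow the proof of Theorem~4.1 in~\cite{Bes}, from which the present statement was extracted, describing the construction while indicating where the four hypotheses enter. First I would reduce to a single chain: since for each $j\in\{0,1,2\}$ the chain $(\Lambda^j_n)_n$ increases to $\Lambda$, the sets $\Lambda_n:=\Lambda^0_n\cap\Lambda^1_n\cap\Lambda^2_n$ still form an increasing chain covering $\Lambda$, and (i)--(iii) hold along it. Then I would fix a continuous norm $\|\cdot\|$ on $X$ (one of the $p_l$), a dense sequence $(y_j)_j$ in the separable space $Y$, and a normalized basic sequence $(e_i)_i$ in the infinite-dimensional closed subspace $M_0$ of~(iv), which exists because any infinite-dimensional closed subspace of a Fr\'echet space with a continuous norm contains a basic sequence. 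The common hypercyclic subspace will be $M:=\overline{\operatorname{span}}\{u_i:i\ge1\}$ with $u_i=e_i+v_i$, the perturbations $v_i$ being constructed so small that $\sum_i\|v_i\|$ is tiny; then $(u_i)$ is again a basic sequence, $M$ is closed and infinite-dimensional, and every $x\in M$ is written uniquely as $x=\sum_{i\ge i_0}a_iu_i$ with $a_{i_0}\ne0$ and the $|a_i|$ bounded by a basis constant times $\|x\|$.

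The heart of the matter is the inductive construction of the $v_i$ together with an increasing sequence of time windows. I would enumerate all quadruples $(i_0,j,p,n)$ so that each appears infinitely often, assign to the $s$-th one a window $B_s=[K_{s-1},K_s]$ (consecutive, $K_0=0$), and at stage $s$ apply~(iii) with $y=y_j$, $\varepsilon=2^{-p}$, the relevant index $l$, and $K_0=K_{s-1}$ to obtain $K_s$ and a vector $w_s$, small in $p_l$, such that for every $\lambda\in\Lambda_n$ some $k\in B_s$ gives $q_l(T_{k,\lambda}w_s-y_j)<2^{-p}$. Using the equicontinuity in~(i) on the finite window $B_s$, a negligible perturbation lets me also take $w_s$ in the dense set $X_{n,0}$ of~(ii), so that $T_{k,\lambda}w_s\to0$ as $k\to\infty$ uniformly on $\lambda\in\Lambda_n$; I would further shrink $w_s$ so as to be negligible on all (finitely many) earlier windows and for $\lambda$ in the (finitely many) chains already in play --- again via~(i) --- and append it to $v_{i_0}$, so that $v_i=\sum_{s:\,i_0(s)=i}w_s$. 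Finally~(iv) is invoked so that the $M_0$-skeleton $\sum a_ie_i$ and the accumulated tails are annihilated by $T_{k,\lambda}$ as $k\to\infty$, for every $\lambda$.

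To verify that every nonzero $x\in M$ is a common hypercyclic vector, I would normalise $x$ (hypercyclicity for a sequence of operators is invariant under non-zero scaling), write $x=m+v$ with $m=\sum a_ie_i\in M_0$ and $v=\sum a_iv_i$, observe by~(iv) that $T_{k,\lambda}m\to0$, and reduce to showing $\{T_{k,\lambda}v:k\ge0\}$ dense in $Y$ for each $\lambda$. Given $\lambda$, a target $y$, and $\delta>0$, I would pick $n$ with $\lambda\in\Lambda_n$ and then $j,p$ with $q_l(y_j-a_{i_0}^{-1}y)$ and $2^{-p}$ small; the quadruple $(i_0,j,p,n)$ is handled at some (large) stage $s$, producing $k\in B_s$ with $a_{i_0}T_{k,\lambda}w_s\approx y$. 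The remaining contribution $\sum_{s'\ne s}c_{s'}T_{k,\lambda}w_{s'}$, with bounded weights $c_{s'}$, is small for this $k$: pieces from stages after $s$ were made negligible on $B_s$ by~(i), and pieces from stages before $s$ tend to $0$ as $k\to\infty$ by~(ii), $k\in B_s$ being large. Hence $q_l(T_{k,\lambda}x-y)<\delta$, and $M$ is a common hypercyclic subspace.

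I expect the main obstacle to be precisely the bookkeeping needed to make this work \emph{uniformly} over the uncountable parameter set: the inductive steps must be interleaved so that at each stage only finitely many time instants and a single chain $\Lambda_n$ are involved, which is exactly what makes~(i) and~(ii) usable, and one must be careful that the earlier perturbations remain controllable on all later windows and for all parameters that eventually become relevant. Passing from the density of the orbit of a fixed vector for a fixed $\lambda$ to the density of the orbit of every vector of $M$ for every $\lambda$ simultaneously is the delicate point, and it is carried out as in the proof of Theorem~4.1 of~\cite{Bes}; the novelty here is only that (i)--(iv) isolate exactly the properties that this argument consumes, so that it can later be applied to the two-parameter family $\{\mu T_a\}$.
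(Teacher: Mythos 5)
Your proposal is correct and takes the same approach as the paper: the paper supplies no explicit proof of Theorem~\ref{M0gen}, stating only that one obtains it by inspecting the proof of Theorem~4.1 in \cite{Bes} and observing that hypotheses (i)--(iv) are precisely what that argument consumes. Your sketch (perturbed basic sequence $u_i=e_i+v_i$ inside $M_0$, windows furnished by (iii), membership in $X_{n,0}$ arranged via a density perturbation controlled by the equicontinuity in (i), and the $M_0$-skeleton killed by (iv)) is a faithful reconstruction of that construction, modulo the summability bookkeeping you yourself flag as the delicate point.
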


Thanks to this criterion, we will be able to show in Section~\ref{DT2} that the family $\{\mu T_a\}_{a,\mu\in\mathbb{C}\backslash\{0\}}$ and even the family $\{\mu D\}_{\mu\ne 0}\cup\{\mu T_a\}_{a,\mu\in\mathbb{C}\backslash\{0\}}$ possess a common hypercyclic subspace. This is the main result of this paper. 

\begin{theorem}\label{mainthm}
The family $\{\lambda D\}_{\lambda\in\mathbb{C}\backslash\{0\}}\cup \{\mu T_a\}_{a,\mu\in\mathbb{C}\backslash\{0\}}$ possesses a common hypercyclic subspace.
\end{theorem}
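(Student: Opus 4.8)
The plan is to apply Theorem~\ref{M0gen} to the combined family, which means producing four ingredients: a workable parametrization, the equicontinuity and smallness data (i), (ii), an approximation statement (iii), and — the crux — a single closed infinite-dimensional subspace $M_0$ on which \emph{every} operator in the family pushes mass to $0$ along a suitable subsequence. First I would handle the parametrization. For the translation part I would write $\mu T_a = \mu^k T_a^{?}$ — more precisely, the natural sequences to consider are $T_{k,(a,\mu)} = (\mu T_a)^k = \mu^k T_{ka}$, and for the derivative part $T_{k,\lambda} = (\lambda D)^k = \lambda^k D^k$. The parameter set $\Lambda$ is then $(\mathbb{C}\setminus\{0\}) \sqcup (\mathbb{C}\setminus\{0\})^2$; I would exhaust it by chains $\Lambda_n^j$ consisting of parameters with $|\lambda|$ (resp. $|\mu|,|a|$) lying in compact subsets of the punctured plane, e.g. $\tfrac1n \le |\lambda|,|\mu|,|a| \le n$. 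On such a piece, equicontinuity of $\{(\lambda D)^k\}$ and $\{(\mu T_a)^k\}$ between two fixed seminorms $p_l, q_l$ of $H(\mathbb{C})$ is a standard Cauchy-estimates computation, giving (i). For (ii) I would take $X_{n,0}$ to be the polynomials: $D^k P = 0$ eventually, and $(\mu T_a)^k P$ on a fixed compact disk is controlled by $|\mu|^k$ times the sup of $P$ on a disk of radius $\sim k|a|$, which for a polynomial of fixed degree $d$ grows like $|\mu|^k (k|a|)^d$; this does \emph{not} tend to $0$ when $|\mu|\ge 1$. So here I would instead choose $\Lambda_n^1$ to sit inside $\{|\mu|<1\}$, or — better, matching what one expects from the Besov-type criterion — use the freedom that (ii)–(iii) are needed on different chains and arrange $\Lambda_n^1$ to be the ``small $|\mu|$'' part of the exhaustion; the subtlety that $\{\mu T_a\}$ with $|\mu|\ge 1$ has no obvious vanishing subsequence on polynomials is exactly why the naive one-dimensional criteria of \cite{Bay0, Bes} fail, and why the flexible hypotheses of Theorem~\ref{M0gen} are needed.

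Next, (iii): given $l, n, \varepsilon, y\in H(\mathbb{C}), K_0$, I must find one entire function $x$ with $p_l(x)<\varepsilon$ such that for \emph{every} parameter $\lambda$ in the compact piece $\Lambda_n^2$, some iterate $T_{k,\lambda}x$ with $k\in[K_0,K_1]$ is $\varepsilon$-close to $y$ on the disk $\{|z|<l\}$. This is the ``common hypercyclicity'' heart, and here I would borrow the Costakis–Sambarino / Shkarin technology directly: cover the compact parameter region by finitely many small balls, and on each ball use a single well-chosen ``bump'' — a high-degree monomial or a suitably normalized polynomial of the form $c\, z^N / N!$ pushed forward under the relevant operator — so that $(\lambda D)^k$ or $(\mu T_a)^k$ applied to it reproduces $y$ on $\{|z|<l\}$ for one $k$ in the window while staying small for the other parameters and in $p_l$; summing these finitely many pieces (with a telescoping/gap argument to separate the windows $[K_0,K_1]$) yields $x$. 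Because $\Lambda_n^2$ is compact and the operators depend continuously on $(\lambda)$ resp. $(a,\mu)$, finitely many balls suffice, so $K_1$ is finite. I would treat the $D$-part and the $T_a$-part separately (they use disjoint blocks of indices $k$) and then concatenate.

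The real obstacle, and where I would spend the bulk of the work, is (iv): constructing the \emph{single} closed infinite-dimensional subspace $M_0$ with $T_{k,\lambda}x\to 0$ as $k\to\infty$ for every $\lambda$ in the \emph{whole} $\Lambda$ and every $x\in M_0$ — i.e. $\|(\lambda D)^k x\|_{p_l}\to 0$ for all $\lambda\ne 0$ and all $l$, \emph{and simultaneously} $\|(\mu T_a)^k x\|_{p_l}\to 0$ for all $a,\mu\ne 0$ and all $l$. The translation condition forces, for $|\mu|\ge 1$, that on a disk of radius $l$ the quantity $|\mu|^k \sup_{|z|<l}|x(z+ka)|\to 0$, i.e. $x$ must decay along every ray faster than any geometric rate — equivalently $x$ must have order $<1$, or rather must decay like $\exp(-c|z|)$ along every direction, which is impossible for a non-constant entire function of order $<1$ unless we use order-$1$ type-$0$-ish behaviour. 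In fact the natural candidates are entire functions of order $1$ and type $0$, e.g. spanned by $f_j(z) = \prod(1 - z/r_{j,m})$ with very sparse zeros, or concretely functions built from $\sum a_n z^n$ with $|a_n|^{1/n}\to 0$ \emph{very} fast — something like $x(z)=\sum_n c_n z^n$ with $c_n = (n!)^{-2}$-type decay so that both $D^k x$ (which shifts and multiplies coefficients by falling factorials) and $T_a^k x$ (a translation, harmless after the decay absorbs the $|\mu|^k$) vanish. I would take $M_0$ to be the closed span of a carefully chosen sequence $(x_j)$ of such functions with rapidly increasing ``gaps'' in their supports (in the sense of Fourier/Taylor coefficients), verify the decay estimate for a single $x_j$ by the Cauchy estimate $\|D^k x_j\|_{p_l}\le k!\, \|x_j\|_{p_{l+1}}/1$ sharpened via the type-$0$ growth, extend it to finite linear combinations by linearity and to the closed span by an equicontinuity/uniformity argument (the estimates must be uniform over the unit ball of $M_0$, which is why the gaps are needed), and finally check that $(x_j)$ spans an \emph{infinite-dimensional} closed subspace by a basic-sequence argument (e.g. they are a perturbation of a block basis of the monomials). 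Verifying that one such $M_0$ works for the $D$-family and the $T_a$-family at once — reconciling the ``$k!$-decay need'' of $D$ with the ``$|\mu|^k$-along-rays decay need'' of $T_a$ — is the single hardest point; I expect order-$1$, type-$0$ entire functions with sufficiently sparse and fast-decaying coefficients to thread this needle, after which Theorem~\ref{M0gen} delivers the common hypercyclic subspace and hence Theorem~\ref{mainthm}.
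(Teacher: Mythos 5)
Your overall strategy (apply Theorem~\ref{M0gen}) is the right one, but the way you propose to obtain the subspace $M_0$ in condition (iv) cannot work, and this is the heart of the matter. You ask for a closed infinite-dimensional space of entire functions $x$ with $(\mu T_a)^k x\to 0$ for \emph{all} $\mu,a\ne 0$ along the \emph{full} sequence of iterates; for $|\mu|>1$ this forces $\sup_{|w|<l}|x(ka+w)|=o(|\mu|^{-k})$ for every ray direction, i.e.\ super-geometric decay of $x$ along every ray. The concrete candidates you name are provably impossible: a non-zero entire function of exponential type zero (order $1$, type $0$) that tends to $0$ along a single line is identically zero by Phragm\'en--Lindel\"of applied to the two half-planes, and coefficients of size $(n!)^{-2}$ give order $1/2$, where the $\cos\pi\rho$ theorem already forbids decay along circles of arbitrarily large radius. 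The paper avoids this dead end in two ways. First, it reduces the family to $\{e^bD\}_{b\in\mathbb{R}}\cup\{e^bT_a\}_{b\in\mathbb{R},\,a\in\mathbb{T}}$ using $\mathrm{HC}(\lambda T)=\mathrm{HC}(T)$ for $|\lambda|=1$~\cite{Leo} and $\mathrm{HC}(T_a)=\mathrm{HC}(T_b)$ for $a/b\in\mathbb{R}^+$~\cite{Con}, so only unimodular translation steps and real exponents $b$ remain. Second, and crucially, it never demands convergence along all $k$: the Criterion $(M_k)$ (Theorem~\ref{NiceMn}, via Proposition~\ref{propMkgen}) produces a non-increasing sequence of subspaces spanned by Mergelyan polynomials $P_s=z^{d_s}Q_s$ that are small only on the finitely many disks $D(-ak,j)$ with $k$ in prescribed windows $[m_{k_{1,s}},m_{k_{1,s}}+\phi(m_{k_{1,s}})]$, yielding an $M_0$ on which $T_{k,\lambda}x\to 0$ only for $k$ in a sparse set $I$ of windows; Theorem~\ref{M0gen} is then applied to the re-indexed family $(T_{k,\lambda})_{k\in I}$. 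Without this windowing device, your condition (iv) is unobtainable.

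There are two further gaps. For condition (ii) you correctly observe that polynomials fail when $|\mu|\ge 1$, but your proposed fix --- letting $\Lambda_n^1$ sit inside $\{|\mu|<1\}$ --- is not available: the chains $(\Lambda_n^j)_n$ must exhaust $\Lambda$, otherwise the criterion asserts nothing for the omitted multiples. The paper instead takes $X_0$ to be a Mergelyan perturbation of a dense sequence of polynomials (Lemma~\ref{lem1genbis}, Proposition~\ref{propX0gen}), arranged so that $e^{bk}T_a^k(\sum_i P_{l,i})\to 0$ along the windows uniformly in $a\in\mathbb{T}\setminus\{e^{i\alpha}:|\alpha|<\theta\}$, with $a=1$ handled by a separate sequence $(t_k)$ because an arc of $\mathbb{T}$ must be removed to keep the complement of the compact set connected. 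Finally, your sketch of (iii) does not address the compatibility between the approximation times and the set $I$ from (iv): the paper enforces this through the notion of $\phi$-increasing sequences and Shkarin's Lemmas~\ref{lemS1}--\ref{lemS2}, where the divergence $\sum_{m=k}^{k+\phi(k)}\frac{1}{m}\to\infty$ guarantees that each window is long enough to realize every parameter in the compact set $\mathbb{T}\times[-n,n]$ (Proposition~\ref{phihyp}). These interlocking choices of $\phi$ and of the windows are precisely what your outline is missing.
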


\section{Common hypercyclic subspace for $\mu D$ and $T_a$}\label{DT}

Let $\mu\ne 0$ and $a\ne 0$. We are interested in the existence of common hypercyclic subspaces for $\mu D$ and $T_a$. Since the family $\{\mu D, T_a\}$ is finite and since $\mu D$ and $T_a$ satisfy the Hypercyclicity Criterion along every increasing sequence, if follows from Theorem~\ref{M0finite} that it suffices to find increasing sequences $(n_k)$ and $(m_k)$ and to find a closed infinite-dimensional $M_0$ such that for every $f\in  H(\mathbb{C})$, we have
\[ (\mu D)^{n_k}f\xrightarrow[k\to\infty]{} 0\quad\text{and}\quad  T_a^{m_k}f\xrightarrow[k\to\infty]{} 0.\]
The existence of such increasing sequences $(n_k)$ and $(m_k)$ and such a closed infinite-dimensional subspace $M_0$ will follow from the following theorem.

\begin{theorem}[Criterion $(M_k)$ for finite families {\cite[Theorem 2.3]{Bes}}]\label{NiceMnfinite} Let  $X$ be an infinite-dimensional Fr\'{e}chet space with continuous norm, let $Y$ be a separable Fr\'echet space and let $\Lambda$ be a finite set.
Let $\{ (T_{k, \lambda})_{k\ge 1} \}_{\lambda\in\Lambda}$ be a family of sequences of operators in $L(X,Y)$.  Suppose that
\begin{enumerate}
\item\  there exists a dense subset $X_{0}$ of  $X$ such that for any $x\in X_{0}$, any $\lambda\in \Lambda$,
\[
T_{k,\lambda}x\xrightarrow[k\to\infty]{} 0;
\]
\item\ there exists a non-increasing sequence of infinite-dimensional closed subspaces $(M_k)$ of $X$ such that  for every $\lambda\in \Lambda$, the sequence $(T_{k,\lambda})_{k\ge 1}$ is equicontinuous along $(M_k)$.
\end{enumerate}
Then for any map $\phi:\mathbb{N}\to \mathbb{N}$ there exist an increasing sequence of integers $(k_s)_{s\ge 1}$ and an infinite-dimensional closed subspace $M_0$ of $X$ such that for any $x\in M_0$, any $\lambda\in \Lambda$,
\[T_{k,\lambda}x\xrightarrow[k\to \infty]{k\in I} 0,\]
where $I=\bigcup_{s\ge 1}[k_s,k_s+\phi(k_s)]$. 
\end{theorem}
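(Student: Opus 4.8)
The plan is to realize $M_0$ as the closed linear span in $X$ of a basic sequence $(u_p)_{p\ge 1}$ that we construct by induction, in parallel with the integers $k_1<k_2<\cdots$, so that the conclusion follows from a ``three‑part'' estimate. Fix a continuous norm $p_{j_0}$ among the seminorms of $X$; the sequence $(u_p)$ will be taken to be basic with basis constant $\le 2$ relative to $p_{j_0}$, so that every $x=\sum_p a_pu_p\in M_0$ satisfies $|a_p|\,p_{j_0}(u_p)\le 4\,p_{j_0}(x)$ for all $p$. Group the indices into consecutive finite blocks $B_s$ ($s\ge1$, say $|B_s|=1$) recording the vectors born at stage $s$; put $N_s=\max B_s$, $I_s=[k_s,k_s+\phi(k_s)]$ and $I=\bigcup_s I_s$, and choose $k_{s+1}$ always past the block $I_s$ so that the $I_s$ are increasing and disjoint.

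Stage $s$ has two moves. First, choose $k_s$ huge: using hypothesis~(1)—and the requirement, built into the construction, that each previously chosen $u_p$ satisfies $T_{k,\lambda}u_p\to0$ as $k\to\infty$ for every $\lambda$—take $k_s$ so large that $q_l(T_{k,\lambda}u_p)\le 2^{-s}p_{j_0}(u_p)$ for all $p\le N_{s-1}$, all $\lambda\in\Lambda$, all $l\le s$, and all $k\ge k_s$; in particular this holds throughout $I_s$. Second, introduce the new vector(s) $u_p$ ($p\in B_s$): take them inside $M_{\,k_s+\phi(k_s)+1}$, with all seminorms $p_m(u_p)$ ($m\le s$) as small as desired, in a direction $p_{j_0}$‑far from $\operatorname{span}(u_1,\dots,u_{N_{s-1}})$ so that $(u_p)$ stays basic, and small enough that $q_l(T_{k,\lambda}u_p)<2^{-p}$ for all $\lambda$, all $l\le s$ and all $k$ lying in one of the finitely many blocks $I_1,\dots,I_s$. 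Hypothesis~(2) is essential here: since $u_p\in M_{\,k_s+\phi(k_s)+1}\subseteq M_k$ whenever $k\le k_s+\phi(k_s)$, equicontinuity of $(T_{k,\lambda})_k$ along $(M_k)$ bounds $q_l(T_{k,\lambda}u_p)$ by a \emph{fixed} seminorm of $u_p$ over the whole range of $k$ that will matter later, so demanding smallness there is compatible with $u_p$ being a basic vector; the finitely many remaining values of $k$ (those in $I_1,\dots,I_{s-1}$) are handled by plain continuity of finitely many operators.

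Set $M_0=\overline{\operatorname{span}\{u_p:p\ge1\}}$ (closure in $X$), a closed infinite‑dimensional subspace. Fix $x=\sum_p a_pu_p\in M_0$, $\lambda\in\Lambda$, $l\ge1$, and $k\in I_s$, and write
\[
T_{k,\lambda}x=\underbrace{\sum_{p\le N_{s-1}}a_p\,T_{k,\lambda}u_p}_{(\mathrm A)}
+\underbrace{\sum_{p\in B_s}a_p\,T_{k,\lambda}u_p}_{(\mathrm B)}
+\underbrace{\sum_{p> N_s}a_p\,T_{k,\lambda}u_p}_{(\mathrm C)} .
\]
For $(\mathrm A)$, the choice of $k_s$ gives $q_l((\mathrm A))\le\sum_{p\le N_{s-1}}|a_p|\,2^{-s}p_{j_0}(u_p)\le 4\,p_{j_0}(x)\,N_{s-1}\,2^{-s}\to0$. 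For $(\mathrm B)$, there are boundedly many terms and each was made tiny, so $q_l((\mathrm B))\to0$ as $s\to\infty$. For $(\mathrm C)$, every $u_p$ with $p>N_s$ was born at a stage $>s$ and hence placed in some $M_r$ with $r>k_s+\phi(k_s)\ge k$, so $\sum_{p>N_s}a_pu_p$ lies in the closed subspace $M_k$; equicontinuity along $(M_k)$ then gives $q_l((\mathrm C))\le C_l\,p_{m_l}\!\bigl(\sum_{p>N_s}a_pu_p\bigr)$, a tail of a convergent series, which tends to $0$ as $s\to\infty$. Hence $q_l(T_{k,\lambda}x)\to0$ along $k\in I$, for every $l$ and $\lambda$, which is the assertion. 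Combined with Theorem~\ref{M0finite} (and the fact that $\mu D$ and $T_a$ satisfy the Hypercyclicity Criterion along every increasing sequence), this yields the common hypercyclic subspace sought in Section~\ref{DT}.

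The step I expect to be the real obstacle is the inductive construction of $(u_p)$, where each new vector must meet several competing demands at once. It must sit deep enough in the chain $(M_k)$ that hypothesis~(2) pins the quantities $q_l(T_{k,\lambda}u_p)$, for all the ``present and future'' values of $k$, to a fixed finite family of seminorms of $u_p$, so that their smallness does not conflict with $(u_p)$ being a basic sequence in the Fréchet space $X$ with well‑behaved coefficient functionals. It must nonetheless satisfy $T_{k,\lambda}u_p\to0$, which is what lets $k_s$ be chosen large at every later stage so as to kill the contribution of $u_p$ to the block $I_s$; this is precisely where hypotheses~(1) and~(2) have to be used together, $X_0$ being dense in $X$ but not necessarily meeting the $M_k$'s, so that $u_p$ has to be taken as a sufficiently small perturbation of an element of $X_0$ with the perturbations chosen summably small. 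And it must be $p_{j_0}$‑separated from the earlier vectors so that $M_0$ is honestly closed and infinite‑dimensional. Getting the order of the choices right—$k_s$ before the stage‑$s$ vectors, the cancellation of a vector's tail contributions deferred to later stages, and the seminorm bookkeeping so that the partial sums of $\sum_p a_pu_p$ actually converge in $X$—so that all of this can be arranged simultaneously, is the technical core of the proof.
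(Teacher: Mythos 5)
The paper quotes this result from [Bes, Theorem 2.3] without reproducing a proof, so there is no internal argument to compare against; I evaluate your sketch on its own terms. The block structure, the three-term split $(\mathrm A)+(\mathrm B)+(\mathrm C)$, and the estimates for $(\mathrm B)$ and $(\mathrm C)$ are sound. The gap is in $(\mathrm A)$, and you have yourself flagged it as ``the real obstacle'' without actually resolving it. You choose $k_s$ so that $q_l(T_{k,\lambda}u_p)\le 2^{-s}p_{j_0}(u_p)$ for all $p\le N_{s-1}$ and all $k\ge k_s$, invoking ``the requirement, built into the construction, that each previously chosen $u_p$ satisfies $T_{k,\lambda}u_p\to0$''. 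But your construction places $u_p$ inside $M_{k_{s_p}+\phi(k_{s_p})+1}$, and nothing in the hypotheses yields $T_{k,\lambda}u\to0$ for $u$ in one of the $M_r$'s: equicontinuity along $(M_k)$ controls $T_{k,\lambda}u$ only when $u\in M_k$, i.e.\ only for $k\le r$ when $u\in M_r$, and says nothing for $k>r$ --- so it pins $q_l(T_{k,\lambda}u_p)$ on the blocks $I_1,\dots,I_{s_p}$ but \emph{not} on any future block $I_s$ with $s>s_p$, contrary to your remark that hypothesis~(2) handles ``present and future values of $k$''. Hypothesis~(1) gives $T_{k,\lambda}x\to0$ only for $x$ in the dense set $X_0$, which need not meet $M_r$; and your suggested repair, taking $u_p$ to be a small perturbation of an $X_0$-element, is incompatible with keeping $u_p\in M_r$ (a proper closed subspace contains no neighbourhood of a generic point of $X$), which you need for $(\mathrm C)$. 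As written, the two demands you place on each $u_p$ cannot be met by a single vector.

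The resolution, which is the actual content of [Bes, Theorem 2.3], is to build each $u_p$ as an infinite series $u_p=\sum_{j\ge s_p}w_{p,j}$ whose stage-$s$ partial sum can be brought close to something to which hypothesis~(1) applies (so that $T_{k,\lambda}$ of it is small on $I_s$), while its stage-$s$ tail lies in $M_{k_s+\phi(k_s)+1}$ (so equicontinuity controls its $T_{k,\lambda}$-image on $I_s$). Carrying this out requires interleaving, at each stage $s$, the choice of $k_s$, a small correction $w_{p,s}$ of \emph{every} previously born vector, and the birth of the new vector, with the correction sizes calibrated simultaneously against the basic-sequence constant and against the finitely many continuous operators $T_{k,\lambda}$ with $k\in I_1\cup\cdots\cup I_s$, so that the final series converge, stay basic, and still satisfy all the stage-by-stage smallness requirements. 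That bookkeeping is precisely the step your sketch names but omits, so the proposal is an accurate identification of the difficulty rather than a proof.
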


The proof of the existence of a common hypercyclic subspace for $\mu D$ and $T_a$ can then be divided into two steps:

\begin{enumerate}
\item There exists a dense set $X_0$ and two increasing sequences of positive integers $(n_k)$ and $(m_k)$ such that for every $f\in X_0$, $(\mu D)^{n_k}f\to 0$ and $T_a^{m_k}f\to 0$.
\item There exists a non-increasing sequence of closed infinite-dimensional subspaces $(M_k)$ such that for every $j\ge 1$, there exists a continuous norm $q_j$ on $H(\mathbb{C})$ such that  for every $k\ge j$, every $f\in M_k$, we have
\[p_j((\mu D)^{n_k}f)\le q_j(f)\quad\text{and}\quad p_j(T_a^{m_k}f)\le q_j(f).\]
\end{enumerate}

In order to construct the set $X_0$ and the sequence $(M_k)$, we will repeatedly use the well-known Mergelyan's Theorem.

\begin{theorem}[Mergelyan's Theorem]
Let $K$ be a compact subset of $\mathbb{C}$ with connected complement. If $h:K\to \mathbb{C}$ is a function continuous on $K$ and holomorphic in the interior of $K$ then for every $\varepsilon>0$, there exists a polynomial $P$ such that 
\[\sup_{z\in K}\|h(z)-P(z)\|<\varepsilon.\]
\end{theorem}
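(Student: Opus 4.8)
The plan is to follow the classical route via the Cauchy--Green (Pompeiu) formula together with a kernel-replacement lemma, which reduces everything to Runge's theorem. First I would dispose of the boundary behaviour of $h$: by Tietze's theorem extend $h$ to a continuous function on $\mathbb{C}$ with compact support, and let $\omega$ be its modulus of continuity, so $\omega(\delta)\to 0$ as $\delta\to 0$. Fix a small $\delta>0$, a smooth bump $\varphi_\delta\ge 0$ supported in $\{|z|<\delta\}$ with $\int\varphi_\delta\,dA=1$, and set $\Phi:=h*\varphi_\delta\in C_c^\infty(\mathbb{C})$. A direct computation using $\int\bar\partial\varphi_\delta\,dA=0$ records the three facts I will use: $\sup_{z\in K}|\Phi(z)-h(z)|\le\omega(\delta)$; $|\bar\partial\Phi(z)|\le c\,\omega(\delta)/\delta$ for all $z$ and an absolute constant $c$; and $\bar\partial\Phi\equiv 0$ outside $S:=\{z:\operatorname{dist}(z,\mathbb{C}\setminus\operatorname{int}K)\le\delta\}$, since near any point whose $\delta$-neighbourhood lies in $\operatorname{int}K$ the function $\Phi$ coincides with a convolution of a holomorphic function with $\varphi_\delta$, hence is holomorphic there. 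As $h$ has compact support, $S$ is bounded, and the Cauchy--Green formula gives
\[
\Phi(z)=-\frac1\pi\int_{S}\frac{\bar\partial\Phi(\zeta)}{\zeta-z}\,dA(\zeta),\qquad z\in\mathbb{C}.
\]

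Next I would replace the Cauchy kernel $1/(\zeta-z)$ by a kernel holomorphic in $z$ near $K$, and this is exactly where the hypothesis that $\mathbb{C}\setminus K$ is connected enters. For $\zeta\in S$ one has $\operatorname{dist}(\zeta,\mathbb{C}\setminus K)\le\delta$ (because $\mathbb{C}\setminus\operatorname{int}K\subseteq\overline{\mathbb{C}\setminus K}$), so pick $\alpha(\zeta)\in\mathbb{C}\setminus K$ with $|\alpha(\zeta)-\zeta|\le\delta$. Since $\mathbb{C}\setminus K$ is connected it reduces to its unbounded component, so $\alpha(\zeta)$ can be joined to $\infty$ inside $\mathbb{C}\setminus K$; a Laurent-series estimate on the disc $D(\zeta,\delta)$ followed by displacing the resulting pole to infinity along such a path (Runge) produces the technical core -- Mergelyan's Lemma -- namely a rational function $R_\zeta$, which may in fact be taken to be a polynomial, all of whose poles lie off $K$, depending measurably on $\zeta$, and satisfying with an absolute constant $C$
\[
|R_\zeta(z)|\le \frac{C}{\delta}\ \text{ for $z$ near $K$},\qquad
\Bigl|R_\zeta(z)-\frac1{\zeta-z}\Bigr|\le\frac{C\,\delta^{2}}{|z-\zeta|^{3}}\ \text{ for }|z-\zeta|\ge 4\delta.
\]

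Then the assembly is bookkeeping. Set $P_0(z):=-\frac1\pi\int_{S}\bar\partial\Phi(\zeta)\,R_\zeta(z)\,dA(\zeta)$. For $z\in K$ split $\Phi(z)-P_0(z)$ over $\{|\zeta-z|<4\delta\}$ and $\{|\zeta-z|\ge4\delta\}$: on the near piece bound $|\bar\partial\Phi|\le c\,\omega(\delta)/\delta$ and use $\int_{|\zeta-z|<4\delta}\bigl(|\zeta-z|^{-1}+C\delta^{-1}\bigr)\,dA(\zeta)=O(\delta)$; on the far piece use the displayed estimate and $\int_{|\zeta-z|\ge4\delta}\delta^{2}|\zeta-z|^{-3}\,dA(\zeta)=O(\delta)$, valid because $S$ is bounded. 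This yields $\sup_{z\in K}|\Phi(z)-P_0(z)|=O(\omega(\delta))$, hence $\sup_{z\in K}|h(z)-P_0(z)|=O(\omega(\delta))\to 0$ as $\delta\to 0$. Finally $P_0$ is, uniformly on $K$, a limit of finite Riemann sums $\sum_i c_i R_{\zeta_i}$, each a rational function with poles off $K$; since $\mathbb{C}\setminus K$ is connected, Runge's theorem approximates each such sum uniformly on $K$ by a polynomial, so $h$ itself is uniformly approximable on $K$ by polynomials, which is the claim.

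The hard part is Mergelyan's Lemma: manufacturing the kernels $R_\zeta$ with the two precise estimates out of the single qualitative hypothesis that $\mathbb{C}\setminus K$ is connected. The subtlety is twofold -- $\alpha(\zeta)$ may lie arbitrarily close to $K$, so the pole cannot be moved away by any purely local perturbation, and the decay exponent genuinely matters: a mere pole displacement gives only $|R_\zeta(z)-1/(\zeta-z)|=O(\delta/|z-\zeta|^{2})$, whose far-field contribution is $O(\omega(\delta)\log(1/\delta))$ and need not vanish, whereas the cubic decay in the lemma makes the far-field error truly $o(1)$. Once that lemma is in hand, the modulus-of-continuity estimates and the concluding Runge step are routine.
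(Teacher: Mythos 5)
The paper does not prove Mergelyan's Theorem; it states it as a classical result and only uses it as a black box, so there is no in-paper argument to compare against. Your outline is the standard one (Tietze extension, mollification with modulus-of-continuity bounds, the Cauchy--Green representation of $\Phi$, replacement of the Cauchy kernel by kernels holomorphic near $K$, and a final Runge step), and the bookkeeping in your third paragraph is correct: the near-field and far-field integrals do give $O(\omega(\delta))$ \emph{provided} the kernels $R_\zeta$ exist with the two stated estimates.

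That existence is precisely where your argument has a genuine gap, and your own last paragraph concedes it. You assert that ``a Laurent-series estimate on the disc $D(\zeta,\delta)$ followed by displacing the resulting pole to infinity along such a path (Runge)'' produces $R_\zeta$ with $|R_\zeta(z)-\tfrac1{\zeta-z}|\le C\delta^2|z-\zeta|^{-3}$, but then correctly observe that pole displacement only yields $O(\delta|z-\zeta|^{-2})$, which leaves a far-field error of order $\omega(\delta)\log(1/\delta)$ that need not vanish. So the mechanism you name cannot deliver the estimate you need: matching only the leading $1/z$ term of the expansion at infinity is structurally limited to quadratic decay. The actual construction (Rudin's Lemma 20.2) is different in kind: one extracts from the unbounded connected complement of $K$ a compact connected set $E\subset\overline{D(\zeta,2\delta)}\cap(\mathbb{C}\setminus K)$ of diameter $\gtrsim\delta$, uses the Riemann map of the unbounded component of $\mathbb{C}\setminus E$ together with Koebe's one-quarter/distortion theorem to produce a function $f$ holomorphic off $E$ with $|f|\le C/\delta$ and $f(z)=\tfrac1{z}+O(|z|^{-2})$ suitably normalized, and then takes $g=f+(\zeta-b)f^2$ with $b$ chosen so that the expansion of $g$ at infinity agrees with that of $1/(z-\zeta)$ through second order; it is this quadratic correction term, not pole pushing, that forces the cubic decay. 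Without proving that lemma (or citing it), the proof is incomplete at its hardest point. Two smaller inaccuracies: $R_\zeta$ cannot ``be taken to be a polynomial'' at this stage (it must be $\asymp 1/(\zeta-z)$ far from $\zeta$ yet bounded near $K$; it is holomorphic off the compact set $E\subset\mathbb{C}\setminus K$, and polynomials only enter at the final Runge step), and the set $S$ as you define it is unbounded --- what is compact is $\operatorname{supp}\bar\partial\Phi$, which is what the integral is really taken over.
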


We notice that if $K$ is a finite union of pairwise disjoint closed disks in  $\mathbb{C}$ then $K$ is a compact subset with connected complement. In particular, we directly deduce the following lemma from Mergelyan's Theorem.

\begin{lemma}\label{lem1}
For every $j\ge 1$, there exists $N\ge 0$ such that for every $n\ge N$, every $\varepsilon>0$, every $f\in H(\mathbb{C})$, there exists a polynomial $P$ such that
\[p_j(P)<\varepsilon\quad\text{and}\quad p_j(T^n_a(P+f))<\varepsilon.\]
\end{lemma}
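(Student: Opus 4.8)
The plan is to reduce the statement to a single application of Mergelyan's Theorem on a compact set made of two disjoint closed disks. First I would rewrite the second norm in terms of the values of the function on a disk far from the origin: since $(T_a^n g)(z)=g(z+na)$, the substitution $w=z+na$ gives
\[p_j(T_a^n g)=\sup_{|z|<j}|g(z+na)|=\sup_{|w-na|<j}|g(w)|\le \sup_{|w-na|\le j}|g(w)|.\]
Thus the two quantities in the statement control $P$ on the closed disk $\{|z|\le j\}$ and $P+f$ on the closed disk $\{|z-na|\le j\}$, and these two disks are disjoint precisely when $|na|>2j$.

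Accordingly, I would fix $j\ge 1$ and set $N=\lceil 2j/|a|\rceil+1$ (any integer with $N|a|>2j$ works). Then for every $n\ge N$ the set
\[K_n:=\{z:|z|\le j\}\cup\{z:|z-na|\le j\}\]
is a union of two disjoint closed disks, hence a compact subset of $\mathbb{C}$ with connected complement, so Mergelyan's Theorem applies to $K_n$. Given such an $n$, together with $\varepsilon>0$ and $f\in H(\mathbb{C})$, I would define $h\colon K_n\to\mathbb{C}$ by $h\equiv 0$ on the first disk and $h\equiv -f$ on the second; disjointness makes $h$ continuous on $K_n$ and holomorphic in its interior, so Mergelyan's Theorem produces a polynomial $P$ with $\sup_{z\in K_n}|h(z)-P(z)|<\varepsilon$.

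It then remains to read off the conclusion. On $\{|z|\le j\}$ one has $h=0$, hence $p_j(P)\le\sup_{|z|\le j}|P(z)-h(z)|<\varepsilon$; on $\{|z-na|\le j\}$ one has $h=-f$, hence $|P(z)+f(z)|=|P(z)-h(z)|<\varepsilon$ there, and by the displayed identity above this yields $p_j(T_a^n(P+f))<\varepsilon$. I do not expect any genuine obstacle in this lemma: the only step requiring an argument is the disjointness of the two disks, which is exactly the point that forces $N$ to depend on $j$ (and on $a$), and everything else is bookkeeping.
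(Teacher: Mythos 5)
Your proof is correct and takes essentially the same approach as the paper: choose $N>2j/|a|$ so the two closed disks of radius $j$ around $0$ and around the translated center are disjoint, apply Mergelyan's Theorem on this two-disk compact to the function equal to $0$ on the origin disk and $-f$ on the other, and read off both estimates. (One small remark: with the convention $T_af(\cdot)=f(\cdot+a)$, the relevant second disk is $\{|z-na|\le j\}=D(na,j)$ as you have it; the paper writes $D(-an,j)$, but since only the distance $n|a|$ from the origin matters for disjointness this sign discrepancy is harmless.)
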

\begin{proof}
Let $j\ge 1$. We consider $N> \frac{2j}{|a|}$ so that for every $n\ge N$, we have ${D(0,j)\cap D(-an,j)=\emptyset}$ where $D(x,r)=\{z\in\mathbb{C}:|z-x|\le r\}$. We then obtain the desired result by applying Mergelyan's theorem to the function $h$ defined from $D(0,j)\cup D(-an,j)$ to $\mathbb{C}$ by $h(z)=-f(z)$ if $z\in D(-an,j)$ and $h(z)=0$ otherwise.
\end{proof}

If $X_0$ is the set of polynomials then it is obvious that $(\mu D)^n P\to 0$ for every $P\in X_0$. However, if $P$ is a non-zero polynomial, we never have $T_a^{n} P\to 0$, even if we look along a subsequence. Thanks to the above lemma, we can however perturb a dense sequence of polynomials in order to get the desired convergences.

\begin{prop}\label{X0}
There exist a dense set $X_0$ and two increasing sequences of positive integers $(n_k)$ and $(m_k)$ such that for every $f\in X_0$, $(\mu D)^{n_k}f\to 0$ and $T_a^{m_k}f\to 0$.
\end{prop}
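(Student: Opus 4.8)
The plan is to perturb a dense sequence of polynomials using Mergelyan's theorem, producing for each index $i$ an \emph{entire} (not polynomial!) function $f_i=g_i+\sum_{t\ge i}Q_t^{(i)}$ given as a locally uniformly convergent series of polynomials, where $(g_i)_{i\ge 1}$ enumerates a countable dense set of polynomials in $H(\mathbb{C})$ with each term repeated infinitely often. The sequences $(n_k)$, $(m_k)$, a sequence of radii $(R_t)$, and all the polynomials $Q_t^{(i)}$ (defined for $i\le t$) will be produced by a single induction on the stage $t\ge 1$, and $X_0:=\{f_i:i\ge1\}$. Two points must be kept in mind: the same pair $(n_k),(m_k)$ has to work for all the $f_i$ simultaneously, and since a non-zero polynomial never satisfies $T_a^{m}P\to0$, the $f_i$ are genuinely non-polynomial, so we must also control arbitrarily high-order derivatives of the perturbation tails.

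At stage $t$, having already fixed $n_1<\dots<n_{t-1}$ and $m_1<\dots<m_{t-1}$, I first choose an integer $R_t\ge t$ so large that $|\mu|^{n_s}\,n_s!\,(R_t-t)^{-n_s}\le 1$ for every $s\le t-1$; by the Cauchy estimates this makes any function uniformly small on $D(0,R_t)$ have $(\mu D)^{n_s}$ uniformly small on $D(0,s)$ for each $s\le t-1$. Next I pick $m_t>m_{t-1}$ so large that the disk $D(-am_t,t)$ (the disk from Lemma~\ref{lem1}) is disjoint from $D(0,R_t)$ and from every previous disk $D(-am_{t'},t')$, $t'<t$. For each $i\le t$ I apply Mergelyan's theorem on $K_t=D(0,R_t)\cup\bigcup_{t'<t}D(-am_{t'},t')\cup D(-am_t,t)$, a finite union of pairwise disjoint closed disks and hence a compact set with connected complement, to the function equal to $0$ on every disk except $D(-am_t,t)$, where it equals $-\big(g_i+\sum_{i\le t'<t}Q_{t'}^{(i)}\big)$. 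This yields a polynomial $Q_t^{(i)}$ with $\sup_{D(0,R_t)}|Q_t^{(i)}|<2^{-t}$, with $\sup_{D(-am_{t'},t')}|Q_t^{(i)}|<2^{-t}$ for all $t'<t$, and with $\sup_{D(-am_t,t)}\big|g_i+\sum_{i\le t'\le t}Q_{t'}^{(i)}\big|<2^{-t}$. Finally I choose $n_t>n_{t-1}$ strictly larger than the degree of $g_i+\sum_{i\le t'\le t}Q_{t'}^{(i)}$ for every $i\le t$.

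It then remains to check the conclusions for $f_i=g_i+\sum_{t\ge i}Q_t^{(i)}$. Local uniform convergence of the series (so $f_i\in H(\mathbb{C})$) and the density of $X_0$ both follow from $\sup_{D(0,R_t)}|Q_t^{(i)}|<2^{-t}$, $R_t\ge t$, and the repetition built into $(g_i)$. For the two convergences, fix $i$ and $j$ and take $k\ge\max(i,j)$, and split $f_i=\big(g_i+\sum_{i\le t\le k}Q_t^{(i)}\big)+\sum_{t>k}Q_t^{(i)}$. On the translation side, the stage-$k$ estimate controls $T_a^{m_k}$ of the first bracket on $D(0,j)$ by $2^{-k}$, while for each $t>k$ the polynomial $Q_t^{(i)}$ is $<2^{-t}$ on the disk $D(-am_k,k)$ (it is one of the ``old'' disks at stage $t$), so $p_j(T_a^{m_k}f_i)\le 2^{-k}+\sum_{t>k}2^{-t}\to0$. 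On the derivative side, the first bracket is a polynomial of degree $<n_k$ by the choice of $n_k$, hence killed by $(\mu D)^{n_k}$; and for each $t>k$ the calibration of $R_t$ together with $\sup_{D(0,R_t)}|Q_t^{(i)}|<2^{-t}$ give, via the Cauchy estimates, $p_k\big((\mu D)^{n_k}Q_t^{(i)}\big)<2^{-t}$, so $p_j\big((\mu D)^{n_k}f_i\big)\le\sum_{t>k}2^{-t}\to0$. Since $j$ is arbitrary, $(\mu D)^{n_k}f_i\to0$ and $T_a^{m_k}f_i\to0$ in $H(\mathbb{C})$.

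The step I expect to be the real obstacle is this last derivative convergence: because the $f_i$ cannot be polynomials, one has to ensure that applying the fixed operators $(\mu D)^{n_k}$ to the infinitely many perturbation tails — whose degrees are large and not under control when $n_k$ is chosen — does not destroy the convergence. This is precisely what forces the order of the construction above: the $n_s$ are fixed first, the later perturbations are then required to be uniformly negligible on the large disks $D(0,R_t)$ with radius $R_t$ calibrated against $|\mu|$ and $n_1,\dots,n_{t-1}$, and this in turn forces $m_t$ to be taken enormous so that $D(-am_t,t)$ still escapes $D(0,R_t)$ and Mergelyan's theorem remains applicable.
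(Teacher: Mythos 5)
Your construction is correct and follows essentially the same route as the paper: a dense sequence of polynomials, each perturbed by an inductively built, locally uniformly convergent series of Mergelyan corrections that are forced to be small on the previously used translated disks and to cancel the partial sum on the new disk $D(-am_t,t)$, with $n_t$ chosen beyond the degrees of the partial sums and the derivative tails tamed by smallness on large disks (the paper phrases your explicit Cauchy-estimate calibration of $R_t$ simply as continuity of the operators $(\mu D)^{n_j}$). One small slip: nothing in your choice of $R_t$ guarantees that $D(0,R_t)$ is disjoint from the earlier disks $D(-am_{t'},t')$, so $K_t$ need not be a union of pairwise disjoint disks as claimed; since the data you prescribe is $0$ on all of these sets, the immediate repair is to also require $R_t>|a|m_{t-1}+t$, so that $D(0,R_t)$ contains every earlier disk, the conditions on the old disks follow from the bound on $D(0,R_t)$, and $K_t$ reduces to the two disjoint disks $D(0,R_t)\cup D(-am_t,t)$.
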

\begin{proof}
We consider a dense sequence of polynomials $(P_{l,l})_{l\ge 0}$ and we construct by induction a family of polynomials $(P_{l,k})_{l<k}$ such that the set $X_0=\{\sum_{i=l}^{\infty}P_{l,i}:l\ge 0\}$ is well-defined and satisfies the required conditions.\\
We start by letting $n_1=\text{deg}(P_{0,0})+1$ and by using Lemma~\ref{lem1} in order to obtain a polynomial $P_{0,1}$ and a positive integer $m_1$ satisfying 
\begin{itemize}
\item $p_1(P_{0,1})<\frac{1}{2}$;
\item $p_1((\mu D)^{n_1}P_{0,1})<\frac{1}{2}$;
\item $p_1(T_a^{m_1}(P_{0,0}+P_{0,1})<\frac{1}{2}$.
\end{itemize}
The second condition can be satisfied because the operator $\mu D$ is continuous and there thus exists $j\ge 1$ and $\varepsilon>0$ such that if $p_j(P_{0,1})<\varepsilon$ then $p_1((\mu D)^{n_1}P_{0,1})<\frac{1}{2}$.\\
More generally, if we assume that the family $(P_{l,j})_{l<j}$ has been constructed for every $j<k$ then we let $n_k=\max\{\text{deg}(P_{l,k-1}):l\le k-1\}+1$ and we use  Lemma~\ref{lem1} several times in order to obtain  polynomials $(P_{l,k})_{l<k}$ and a positive integer $m_k>m_{k-1}$ such that for every $l<k$
\begin{itemize}
\item $p_k(P_{l,k})<\frac{1}{2^k}$;
\item $p_k((\mu D)^{n_j}P_{l,k})<\frac{1}{2^k}$ for every $j\le k$;
\item $p_k(T_a^{m_j}P_{l,k})<\frac{1}{2^k}$ for every $j<k$;
\item $p_k\Big(T_a^{m_k}\Big(\sum_{j=l}^kP_{l,j}\Big)\Big)<\frac{1}{2^k}$.
\end{itemize}
The second and the third conditions can be satisfied by using as previously the continuity of $\mu D$ and the continuity of $T_a$.\\
Let $X_0:=\{\sum_{i=l}^{\infty}P_{l,i}:l\ge 0\}$. We first remark that each series $\sum_{i=l}^{\infty}P_{l,i}$ is convergent and that $X_0$ is dense since for every $i>l$, $p_i(P_{l,i})<\frac{1}{2^i}$. Moreover, by definition of the sequence $(n_j)$ we have $D^{n_j}P_{l,i}=0$ for every $i<j$. Therefore, if $j\ge \max\{l,k\}$, we have
\begin{align*}
p_k\Big((\mu D)^{n_j}\Big(\sum_{i=l}^{\infty}P_{l,i}\Big)\Big)&= p_k\Big((\mu D)^{n_j}\Big(\sum_{i=j}^{\infty}P_{l,i}\Big)\Big)\\
&\le \sum_{i=j}^{\infty}p_i((\mu D)^{n_j}P_{l,i})\le \sum_{i=j}^{\infty}\frac{1}{2^i}\xrightarrow[j\to +\infty]{} 0
\end{align*}
and
\begin{align*}
p_k\Big(T_a^{m_j}\Big(\sum_{i=l}^{\infty}P_{l,i}\Big)\Big)&\le 
p_j\Big(T_a^{m_j}\Big(\sum_{i=l}^{j}P_{l,i}\Big)\Big)+ \sum_{i=j+1}^{\infty} p_i(T_a^{m_j}P_{l,i})\\
&<\frac{1}{2^j}+\sum_{i=j+1}^{\infty}\frac{1}{2^i}\xrightarrow[j\to +\infty]{} 0.
\end{align*}
\end{proof}

Let $p'_j(f)=\sum_{k=0}^{\infty}|x_k|j^k$ where $f=\sum_{k=0}^{\infty}x_kz^k$. The sequence $(p'_j)$ is another increasing sequence of norms inducing the topology of $H(\mathbb{C})$ which is particularly useful when we want to construct a hypercyclic subspace for the operators $P(D)$ where $P$ is a non-constant polynomial (see~\cite{Men1}). For instance, the existence of an increasing sequence $(n_k)$ and a non-increasing sequence of closed infinite-dimensional subspaces $(M_k)$ such that $((\mu D)^{n_k})$ is equicontinuous along $(M_k)$ relies on the fact that for every $k$
\[\limsup_n\max_{m,j\le k}\frac{p'_{j}((\mu D)^{m}z^n)}{p'_{2j}(z^n)}=\limsup_n\max_{m,j\le k}\frac{|\mu|^m (\prod_{l=n-m+1}^{n}l)j^{n-m}}{(2j)^n}=0.\]
Indeed, thanks to these equalities, we can find an increasing sequence $(n_k)$ such that for every $i,j\le k$, \[p'_{j}((\mu D)^{n_i}z^{n_k})\le p'_{2j}(z^{n_k})\] and it is not difficult to prove that $((\mu D)^{n_k})$ is then equicontinuous along $(M_k)$ if we let $M_k=\overline{\text{span}}\{z^{n_l}:l\ge k\}$. 

In fact, the construction of a convenient sequence $(M_k)$ for $\mu D$ mainly relies on the valuation of considered elements. The following lemma will allow us to adapt the above reasoning in order to obtain a non-increasing sequence of closed infinite-dimensional subspaces $(M_k)$ working for both $\mu D$ and $T_a$.

\begin{lemma}\label{lem2}
Let $(m_k)$ be an increasing sequence of positive integers such that $m_1|a|>2$ and $D(-m_ja,j)\cap D(-m_ka,k)= \emptyset$ for every $j\ne k$. Then for every $k\ge 1$, every $\varepsilon>0$, every $d\ge 0$, there exists a polynomial $P$ such that
\[\text{\emph{val}}(P)\ge d,\quad p_1(P)\ge \frac{1}{2}\quad\text{and}\quad p_j(T^{m_j}_aP)<\varepsilon\quad\text{for every $j\le k$}.\]
\end{lemma}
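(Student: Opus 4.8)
The plan is to obtain $P$ by combining Mergelyan's theorem with the standard device of factoring out a power of $z$ to control the valuation, in the spirit of the proof of Lemma~\ref{lem1}. Fix $k\ge 1$, $\varepsilon>0$ and $d\ge 0$ and consider the compact set $K=D(0,1)\cup\bigcup_{j=1}^{k}D(-m_ja,j)$.

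The first step will be to verify that these $k+1$ closed disks are pairwise disjoint, so that $K$ has connected complement. The disks $D(-m_ja,j)$ with $1\le j\le k$ are disjoint by hypothesis, and $D(0,1)\cap D(-m_1a,1)=\emptyset$ since $m_1|a|>2$. For $2\le j\le k$, the assumption $D(-m_1a,1)\cap D(-m_ja,j)=\emptyset$ forces $(m_j-m_1)|a|>1+j$, whence $m_j|a|>m_1|a|+1+j>j+3>j+1$, so $D(0,1)$ is disjoint from $D(-m_ja,j)$ as well. I expect this disjointness check to be the only genuine subtlety of the argument: one has to notice that the disk about the origin must be separated from all of the translated disks, and that this is not a consequence of $m_1|a|>2$ alone but must be extracted by reusing the pairwise-disjointness hypothesis as above.

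With $K$ at hand, I would define $\psi\colon K\to\mathbb{C}$ by $\psi\equiv 1$ on $D(0,1)$ and $\psi\equiv 0$ on each $D(-m_ja,j)$; since the disks are closed and disjoint, $\psi$ is continuous on $K$ and holomorphic on the interior of $K$. Mergelyan's theorem then provides, for any prescribed $\delta>0$, a polynomial $Q$ with $\sup_{z\in K}|Q(z)-\psi(z)|<\delta$, and I take $P:=z^dQ$, which has $\operatorname{val}(P)\ge d$ by construction. On $D(0,1)$ one gets $|P(z)-z^d|=|z|^d\,|Q(z)-\psi(z)|<\delta$, hence $p_1(P)\ge\sup_{|z|<1}\big(|z|^d-\delta\big)=1-\delta$. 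On $D(-m_ja,j)$ one gets $|P(z)|=|z|^d\,|Q(z)|<(m_j|a|+j)^d\delta$, and, exactly as in Lemma~\ref{lem1}, $p_j(T_a^{m_j}P)$ is bounded by $\sup_{D(-m_ja,j)}|P|$. It then suffices to fix $\delta\le\tfrac12$ small enough that $(m_j|a|+j)^d\delta<\varepsilon$ for each of the finitely many indices $j\le k$; this yields $p_1(P)\ge\tfrac12$ and $p_j(T_a^{m_j}P)<\varepsilon$ for every $j\le k$, as required. Everything beyond the disjointness point flagged above is routine.
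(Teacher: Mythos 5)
Your proof is correct and follows essentially the same route as the paper: approximate by Mergelyan the function that is $1$ on $D(0,1)$ and $0$ on the translated disks, then multiply by $z^d$ to force the valuation, absorbing the factor $(m_j|a|+j)^d$ into the Mergelyan tolerance. The only difference is that you explicitly verify that $D(0,1)$ is disjoint from every $D(-m_ja,j)$ (the paper just asserts well-definedness), and your derivation of this from $m_1|a|>2$ together with the pairwise disjointness of the translated disks is a welcome, correct addition.
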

\begin{proof}
We define a function $g$ from $D(0,1)\cup \bigcup_{j\le k}D(-am_j,j)$ to $\mathbb{C}$ by $g(z)=1$ if $z\in D(0,1)$ and $g(z)=0$ otherwise. By assumption, $g$ is well-defined and since $g$ is continuous on $D(0,1)\cup \bigcup_{j\le k}D(-am_j,j)$ and holomorphic on the interior of $D(0,1)\cup \bigcup_{j\le k}D(-am_j,j)$, we deduce from the Mergelyan's Theorem that there exists a polynomial $Q$ such that
\[p_1(Q-1)<\frac{1}{2}\quad\text{and}\quad \sup_{z\in D(-am_j,j)}|Q(z)|<\frac{\varepsilon}{(|a|m_j+j)^d}\quad\text{for every $j\le k$}.\]
Let $j\le k$. We deduce that if we let $P(z)=z^dQ(z)$, then we have $\text{val}(P)\ge d$,
\[p_j(T^{m_j}_a P)=\sup_{z\in D(-am_j,j)}|P(z)|\le (|a|m_j+j)^d\sup_{z\in D(-am_j,j)}|Q(z)|<\varepsilon\]
and since $p_1(Q-1)<\frac{1}{2}$, we have 
\[p_1(P)=\sup_{|z|=1}|z^dQ(z)|=\sup_{|z|=1}|Q(z)|\ge \frac{1}{2}.\]
\end{proof}

\begin{prop}\label{propMk}
Let $(n_k)$ and $(m_k)$ be two increasing sequences of positive integers. If $m_1|a|>2$ and $D(-m_ja,j)\cap D(-m_ka,k)= \emptyset$ for every $j\ne k$ then there exists a non-increasing sequence of closed infinite-dimensional subspaces $(M_k)$ such that for every $j\ge 1$, there exists a continuous norm $q_j$ on $H(\mathbb{C})$ such that for every $k\ge j$, every $f\in M_k$, we have
\[p_j((\mu D)^{n_k}f)\le q_j(f)\quad\text{and}\quad p_j(T_a^{m_k}f)\le q_j(f).\]
\end{prop}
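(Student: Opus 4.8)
The plan is to build $M_k$ as the closed span of a carefully chosen sequence of polynomials $(f_l)_{l\ge 1}$, with $M_k=\overline{\operatorname{span}}\{f_l:l\ge k\}$, where each $f_l$ is produced so that it behaves well simultaneously for the dynamics of $\mu D$ (which is controlled by \emph{valuation}, via the norms $p'_j$ recalled before Lemma~\ref{lem2}) and for the dynamics of $T_a^{m_k}$ (which is controlled by smallness on the disks $D(-am_j,j)$, via Lemma~\ref{lem2}). Concretely, I would construct the $f_l$ by induction on $l$: having chosen $f_1,\dots,f_{l-1}$, pick $d_l$ larger than the degrees of $f_1,\dots,f_{l-1}$ and apply Lemma~\ref{lem2} with this $d=d_l$, with $k=l$, and with $\varepsilon=\varepsilon_l$ small (to be fixed below), to obtain a polynomial $f_l$ with $\operatorname{val}(f_l)\ge d_l$, $p_1(f_l)\ge\tfrac12$, and $p_j(T_a^{m_j}f_l)<\varepsilon_l$ for all $j\le l$. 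The lower bound $p_1(f_l)\ge\tfrac12$ together with the fact that the $f_l$ have strictly increasing valuations guarantees that the $f_l$ are linearly independent and, more importantly, that $(f_l)$ is a basic sequence with a good lower estimate (any nontrivial finite combination $\sum_l c_l f_l$ has, on the lowest-valuation term, a coefficient that survives because of the valuation gaps), so each $M_k$ is genuinely infinite-dimensional and closed, and any $f\in M_k$ admits an expansion $f=\sum_{l\ge k}c_l f_l$ with the $c_l$ controlled by $f$.

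The next step is to choose the auxiliary norm $q_j$. For the $T_a$ part, note that for $k\ge j$ and $f=\sum_{l\ge k}c_l f_l\in M_k$ we have
\[
p_j(T_a^{m_k}f)\le\sum_{l\ge k}|c_l|\,p_j(T_a^{m_k}f_l)\le\sum_{l\ge k}|c_l|\,\varepsilon_l,
\]
using $j\le k\le l$ and the defining property of $f_l$; if the $\varepsilon_l$ decay fast enough relative to the (fixed, once constructed) norms that bound the coefficients $|c_l|$ in terms of $f$, this is at most some continuous seminorm of $f$. For the $\mu D$ part, recall that $(\mu D)^{n_k}f_l=0$ whenever $n_k<\operatorname{val}(f_l)$; since $\operatorname{val}(f_l)\ge d_l\to\infty$, for each fixed $k$ only finitely many $l\ge k$ contribute, and for those we use the continuity of $(\mu D)^{n_k}$ together with the elementary estimate (coming exactly from the $\limsup$ computation with the $p'_j$ norms displayed in the text) that $p_j((\mu D)^{n_k}f_l)$ is dominated by $p'_{2j}(f_l)$, hence by a continuous seminorm of $f$. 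Taking $q_j$ to be a sufficiently large multiple of a fixed $p'_{2j}$-type norm plus the seminorm coming from the $T_a$ estimate gives a single continuous norm $q_j$ majorizing both quantities for all $k\ge j$ and all $f\in M_k$.

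The main obstacle is the bookkeeping that makes the two normalizations compatible: the valuation gaps $d_l$ must be chosen \emph{before} invoking Lemma~\ref{lem2}, yet they must be large enough that (a) the basic-sequence estimate giving $|c_l|\lesssim q(f)$ holds with a controlled constant, and (b) the finitely-many surviving $\mu D$ terms are each controlled by $p'_{2j}(f)$ with a constant independent of $k$; meanwhile the $\varepsilon_l$ must then be chosen small enough relative to those (already fixed) constants so that $\sum_l|c_l|\varepsilon_l\le q_j(f)$ uniformly in $k$. This is a standard ``choose the gaps first, then the errors'' diagonalization, and the hypotheses on $(m_k)$ in the statement are exactly what Lemma~\ref{lem2} needs at each step, so no circularity arises. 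Once $(M_k)$ and $(q_j)$ are in hand the stated inequalities follow directly, and Theorem~\ref{NiceMnfinite} (via the two-step reduction described before Lemma~\ref{lem1}) then yields the common hypercyclic subspace for $\mu D$ and $T_a$.
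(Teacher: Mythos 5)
Your overall blueprint matches the paper's (build a valuation-gapped basic sequence of polynomials via Lemma~\ref{lem2}, span the tails to get $M_k$, control $T_a^{m_k}$ via the smallness on the translated disks and $(\mu D)^{n_k}$ via the $p'_j$ norms), but the reasoning you give for the $\mu D$ estimate is wrong in a way that breaks the argument as stated. You claim that $(\mu D)^{n_k}f_l=0$ whenever $n_k<\operatorname{val}(f_l)$; this is false --- lowering $z^{100}$ by $50$ derivatives gives a nonzero multiple of $z^{50}$. What is true is $(\mu D)^{n_k}f_l=0$ precisely when $n_k>\deg(f_l)$. Since $\deg(f_l)\to\infty$, for a fixed $k$ the set of $l\ge k$ with $(\mu D)^{n_k}f_l\ne 0$ is \emph{cofinite}, not finite: the terms that vanish are the small $l$, not the tail. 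Hence you cannot reduce to a finite sum, and invoking ``continuity of $(\mu D)^{n_k}$'' would in any case give a constant depending on $n_k$, which is useless because $q_j$ must not depend on $k$.

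The fix is exactly the mechanism you half-mention but do not deploy correctly: the valuation thresholds $d_l$ must be chosen so that $\operatorname{val}(f_l)\ge N_l$ where $N_l$ satisfies, for all $n\ge N_l$ and all $i,j\le l$, $p'_{l_j}\bigl((\mu D)^{n_i}e_n\bigr)\le p'_{2l_j}(e_n)$ (this is possible because the displayed $\limsup$ is $0$; here $l_j$ is such that $p_j\le C_jp'_{l_j}$ --- note it is $p'_{2l_j}$, not $p'_{2j}$). Then for \emph{every} $l\ge k\ge j$ one gets $p'_{l_j}\bigl((\mu D)^{n_k}(c_lf_l)\bigr)\le p'_{2l_j}(c_lf_l)$, uniformly in $l$ and $k$, and because the $f_l$ have pairwise disjoint monomial supports the whole series sums to $p_j\bigl((\mu D)^{n_k}f\bigr)\le C_jp'_{2l_j}(f)$. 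This is the paper's argument; once you replace your ``finitely many survive'' reduction with this uniform termwise estimate over the entire tail, your proof coincides with it.
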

\begin{proof}
Since the sequence $(p'_j)$ is an increasing sequence of norms inducing the topology of $H(\mathbb{C})$, we deduce that for every $j\ge 0$, there exists $C_j> 0$ and $l_j\ge 1$ such that for every $f\in H(\mathbb{C})$, $p_j(f)\le C_j p'_{l_j}(f)$.\\
Let $P_0=1$. We then consider a sequence $(P_k)_{k\ge 1}$ such that for every $k\ge 1$
\begin{itemize}
\item $\text{val}(P_{k})>\text{deg}(P_{k-1})$;
\item $p_1(P_k)\ge \frac{1}{2}$;
\item $\text{val}(P_k)\ge N_k$ where $N_k$ satisfies
\[\sup_{n\ge N_k}\max_{i,j\le k}\frac{p'_{l_j}((\mu D)^{n_i}e_n)}{p'_{2l_j}(e_n)}\le 1.\]
\item $p_j(T_a^{m_j}P_k)\le \frac{1}{2^k}$ for every $j\le k$.
\end{itemize}
The existence of $N_k$ follows from the fact that
\begin{align*}
\limsup_n\max_{i,j\le k}\frac{p'_{l_j}((\mu D)^{n_i}e_n)}{p'_{2l_j}(e_n)}
&=\limsup_n\max_{i,j\le k}\frac{l_j^{n-n_i}|\mu|^{n_i}\prod_{\nu=n-n_i+1}^{n}\nu}{(2l_j)^n}\\
&\le \limsup_n\frac{\max\{1,|\mu|\}^{n_k}n^{n_k}}{2^n}=0.
\end{align*}
and the existence of the sequence $(P_k)_{k\ge 1}$ follows from Lemma~\ref{lem2}.

Since $\text{val}(P_{k+1})>\text{deg}(P_k)$, the sequence $(P_k)_{k\ge 1}$ is a basic sequence in $H(\mathbb{C})$. We let $M_k=\overline{\text{span}}\{P_l:l \ge k\}$ and we show that this sequence of closed infinite-dimensional subspaces satisfies the required conditions. We first remark that if $f=\sum_{l=k}^{\infty}a_lP_l$ is convergent then the sequence $(a_l)_{l\ge k}$ is bounded by $2C_1 p'_{l_1}(f)$. Indeed, since $p_1(P_l)\ge \frac{1}{2}$, we have
\[|a_l|\le 2p_1(a_lP_l)\le 2C_1p'_{l_1}(a_lP_l)\le 2C_1 p'_{l_1}(f).\]
Let $1\le j\le k$ and $f=\sum_{l=k}^{\infty}a_lP_l\in M_k$. We have thus
\[p_j(T_a^{m_k}f)\le \sum_{l=k}^{\infty} p_k(T_a^{m_k}(a_lP_l))\le \sum_{l=k}^{\infty}\frac{|a_l|}{2^l}\le 2C_1 p'_{l_1}(f)\]
and
\begin{align*}
p_j((\mu D)^{n_k}f)\le C_jp'_{l_j}((\mu D)^{n_k}f)
&= \sum_{l=k}^{\infty} C_jp'_{l_j}((\mu D)^{n_k}(a_lP_l))\\
&\le \sum_{l=k}^{\infty} C_jp'_{2l_j}(a_lP_l)\quad\text{since val}(P_l)\ge N_l\\
&=C_jp'_{2l_j}(f).
\end{align*}
We have thus the desired result if we consider $q_j=\max\{2C_1 p'_{l_1},C_jp'_{2l_j}\}$.
\end{proof}

\begin{theorem}
The operators $\mu D$ and $T_a$ possess a common hypercyclic subspace.
\end{theorem}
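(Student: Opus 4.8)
The plan is to glue together the two propositions just established, using Criterion $(M_k)$ for finite families (Theorem~\ref{NiceMnfinite}) to produce the subspace $M_0$, and then to apply Criterion $M_0$ for countable families (Theorem~\ref{M0finite}) to the finite family $\{\mu D,\, T_a\}$. First I would run the construction of Proposition~\ref{X0} to obtain a dense set $X_0\subseteq H(\mathbb C)$ and increasing sequences $(n_k)$, $(m_k)$ with $(\mu D)^{n_k}f\to 0$ and $T_a^{m_k}f\to 0$ for every $f\in X_0$ --- but I would carry along two extra requirements on $(m_k)$, namely $m_1|a|>2$ and $D(-m_ja,j)\cap D(-m_ka,k)=\emptyset$ for $j\ne k$, so that Proposition~\ref{propMk} becomes applicable. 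These cost nothing: in the inductive step of Proposition~\ref{X0} the integer $m_k$ coming from Lemma~\ref{lem1} may be taken as large as we please (any $m_k\ge N$ with $m_k>m_{k-1}$), so at each stage I may additionally impose, say, $|a|(m_k-m_{k-1})>2k$ together with $m_1|a|>2$, which makes the closed disks $D(-m_ja,j)$ pairwise disjoint. With this choice, Proposition~\ref{propMk} furnishes a non-increasing sequence of closed infinite-dimensional subspaces $(M_k)$ and continuous norms $q_j$ on $H(\mathbb C)$ such that $p_j((\mu D)^{n_k}f)\le q_j(f)$ and $p_j(T_a^{m_k}f)\le q_j(f)$ whenever $k\ge j$ and $f\in M_k$.

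Next I would set $X=Y=H(\mathbb C)$, $\Lambda=\{1,2\}$, $T_{k,1}=(\mu D)^{n_k}$ and $T_{k,2}=T_a^{m_k}$, and verify the hypotheses of Theorem~\ref{NiceMnfinite}. Hypothesis (1) is exactly Proposition~\ref{X0} with the dense set $X_0$. Hypothesis (2) --- equicontinuity of $(T_{k,\lambda})_k$ along $(M_k)$ for each $\lambda$ --- is Proposition~\ref{propMk}; the estimates there are only asserted for $k\ge j$, but since each $T_{k,\lambda}$ is continuous on $H(\mathbb C)$ one simply enlarges $q_j$ to a continuous norm also dominating $p_j\circ T_{k,\lambda}$ on all of $X$ for the finitely many $k<j$, without affecting the hypothesis. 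Applying Theorem~\ref{NiceMnfinite} with, say, $\phi\equiv 0$ then yields an increasing sequence $(k_s)_{s\ge 1}$ and an infinite-dimensional closed subspace $M_0$ of $H(\mathbb C)$ such that for every $f\in M_0$,
\[(\mu D)^{n_{k_s}}f\xrightarrow[s\to\infty]{}0\qquad\text{and}\qquad T_a^{m_{k_s}}f\xrightarrow[s\to\infty]{}0.\]

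Finally I would apply Theorem~\ref{M0finite} to the finite family $\{\mu D,\,T_a\}$ with this subspace $M_0$. For $\mu D$ I take the increasing sequence $(n_{k_s})_{s\ge 1}$: the operator $\mu D$ satisfies the Hypercyclicity Criterion along every increasing sequence, and $(\mu D)^{n_{k_s}}f\to 0$ for every $f\in M_0$ by the previous step. For $T_a$ I take $(m_{k_s})_{s\ge 1}$, for which the same two properties hold. Theorem~\ref{M0finite} then delivers a common hypercyclic subspace for $\mu D$ and $T_a$, which is the assertion.

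Regarding difficulty: all of the analytic content is already packaged (Mergelyan's theorem through Lemmas~\ref{lem1} and~\ref{lem2}, and the valuation/growth estimates inside Propositions~\ref{X0} and~\ref{propMk}), so the argument is mostly an assembly of these pieces via Theorems~\ref{NiceMnfinite} and~\ref{M0finite}. The one point that genuinely needs care is compatibility: one cannot invoke Propositions~\ref{X0} and~\ref{propMk} as black boxes, since the geometric constraints on $(m_k)$ required by the latter must be built into the inductive construction of the former --- which, fortunately, is painless because Lemma~\ref{lem1} imposes no upper bound on $m_k$.
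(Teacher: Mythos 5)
Your proposal is correct and follows essentially the same route as the paper: Proposition~\ref{X0} (with the disk-disjointness constraints on $(m_k)$ imposed during the induction, which the paper dismisses as ``without loss of generality'' and you rightly justify via the freedom in Lemma~\ref{lem1}), then Proposition~\ref{propMk}, then Theorem~\ref{NiceMnfinite} to produce $M_0$, and finally Theorem~\ref{M0finite} using that $\mu D$ and $T_a$ satisfy the Hypercyclicity Criterion along every increasing sequence. No gaps.
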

\begin{proof}
By Proposition~\ref{X0}, there exist a dense set $X_0$ and two increasing sequences of positive integers $(n_k)$ and $(m_k)$ such that for every $f\in X_0$, $(\mu D)^{n_k}f\to 0$ and $T_a^{m_k}f\to 0$. Without loss of generality, we can assume that $m_1|a|>2$ and $D(-m_ja,j)\cap D(-m_ka,k)\ne \emptyset$ for every $j\ne k$. Thanks to Proposition~\ref{propMk}, we then get the existence of a non-increasing sequence of infinite-dimensional closed subspaces $(M_k)$ of $X$ such that $((\mu D)^{n_k})_k$ and $(T^{m_k}_a)_k$ are equicontinuous along $(M_k)$. We now deduce from Theorem~\ref{NiceMnfinite} applied with $T_{k,1}=(\mu D)^{n_k}$ and $T_{k,2}=T^{m_k}_a$ that there exist a closed infinite-dimensional closed subspace $M_0$ and subsequences $(n'_k)$ of $(n_k)$ and $(m'_k)$ of $(m_k)$ such that for every $f\in M_0$,
\[(\mu D)^{n'_k}f\to 0 \quad\text{and}\quad T^{m'_k}_af\to 0.\]
Since $\mu D$ and $T_a$ satisfy the Hypercyclicity Criterion along each increasing sequence and thus in particular along $(n'_k)$ and $(m'_k)$, the desired result follows from Theorem~\ref{M0finite}.
\end{proof}

\section{Common hypercyclic subspace for $\{\mu D\}_{\mu\in\mathbb{C}\backslash\{0\}}$ and $\{\mu T_a\}_{a,\mu\in \mathbb{C}\backslash\{0\}}$}\label{DT2}

In this section, we are interested in the family $\{\mu D\}_{\mu \in\mathbb{C}\backslash\{0\}}\cup\{\mu T_a\}_{a,\mu\in \mathbb{C}\backslash\{0\}}$. We remark that the considered family is now infinite and even uncountable. Fortunately, in order to show that the families $\{\mu D\}_{\mu \in\mathbb{C}\backslash\{0\}}$ and $\{\mu T_a\}_{a,\mu\in \mathbb{C}\backslash\{0\}}$ possess a common hypercyclic subspace, it suffices to show that the families $\{e^b D\}_{b\in \R}$ and $\{e^b T_a\}_{b\in \R, a\in \mathbb{T}}$ possess a common hypercyclic subspace since $\text{HC}(\lambda T)=\text{HC}(T)$ if $\lambda\in \T$ (see~\cite{Leo}) and since $\text{HC}(T_a)=\text{HC}(T_b)$ if $\frac{a}{b}\in\mathbb{R}^+$( see~\cite{Con}).

The existence of common hypercyclic subspaces for the families $\{e^b D\}_{b\in \R}$ and $\{e^b T_a\}_{b\in \R, a\in \mathbb{T}}$ will be obtained by applying Theorem~\ref{M0gen} to the family $\{T_{k,\lambda}\}_{\lambda\in \Lambda}$ given by
\begin{itemize}
\item $\Lambda:=\mathbb{R}\cup(\T\times\mathbb{R})$, 
\item $T_{k,b}=e^b D^{\tilde{n}_k}$ for every $b\in \mathbb{R}$,
\item  $T_{k,(a,b)}=e^bT^{\tilde{m}_k}_a$ for every $a\in\T\backslash\{1\}$, every $b\in\R$, 
\item $T_{k,(1,b)}=e^b T^{\tilde{t}_k}_1$ for every $b\in\R$
\end{itemize}
and by considering the chains $(\Lambda_n)$ given by 
\[\Lambda_n=[-n,n]\cup \big(\T\times [-n,n]\big)\quad \text{or} \quad[-n,n]\cup\big((\mathbb{T}\backslash\{e^{i\theta}:|\theta|<\frac{1}{n}\})\times [-n,n]\big)\cup\big(\{1\}\times [-n,n]\big).\]
The existence of a suitable subspace $M_0$ will be again obtained thanks to Criterion~$(M_k)$.

\begin{theorem}[Criterion $(M_k)$ for infinite families {\cite[Theorem 2.3]{Bes}}]\label{NiceMn}
Let  $X$ be an infinite-dimensional Fr\'{e}chet space with continuous norm, let $Y$ be a separable Fr\'echet space and let $\Lambda$ be a set.
Let $\{ (T_{k, \lambda})_{k\ge 1} \}_{\lambda\in\Lambda}$ be a family of sequences of operators in $L(X,Y)$.  Suppose that there exist chains   $(\Lambda^j_n)_{n\ge 1}$ of $\Lambda$ $(j=0,1,2)$  satisfying:
\begin{enumerate}
\item[(i)]\  for each $n\in\mathbb{N}$ and each $k\in \N$, the family
$
\{  T_{k, \lambda} \}_{\lambda\in \Lambda_n^0} \ \mbox{ is equicontinuous;}
$
\item[(ii)]\  for each $n\in\mathbb{N}$, there exists a dense subset $X_{n,0}$ of  $X$ such that for any $x\in X_{n,0}$,
\[
T_{k,\lambda}x\xrightarrow[k\to\infty]{} 0 \ \ \ \mbox{uniformly on $\lambda\in \Lambda_n^1$;}
\]
\item[(iii)]\ there exists a non-increasing sequence of infinite-dimensional closed subspaces $(M_k)$ of $X$ such that for each $n\ge 1$,  the family of sequences
\[
\{   (T_{k,\lambda})_{k\ge 1}\}_{\lambda\in \Lambda_n^2}
\]
is uniformly equicontinuous along $(M_k)$.
\end{enumerate}
Then for any map $\phi:\mathbb{N}\to \mathbb{N}$ there exist an increasing sequence of integers $(k_s)_{s\ge 1}$ and an infinite-dimensional closed subspace $M_0$ of $X$ such that for any $(x,\lambda)\in M_0\times \Lambda$,
\[T_{k,\lambda}x\xrightarrow[k\to \infty]{k\in I} 0,\]
where $I=\bigcup_{s\ge 1}[k_s,k_s+\phi(k_s)]$. 
\end{theorem}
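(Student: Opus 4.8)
The plan is to build $M_0$ as the closed span of a basic sequence $(u_s)_{s\ge1}$ produced by a gliding-hump induction that simultaneously produces the integers $k_1<k_2<\cdots$, the three hypotheses being used on three complementary pieces of the sum $T_{k,\lambda}x=\sum_s a_s T_{k,\lambda}u_s$. Fix increasing sequences of seminorms $(p_m)$ on $X$, with $p_1$ a norm (it exists by assumption), and $(q_m)$ on $Y$. From (iii) I would record continuous seminorms $\rho_{n,m}$ on $X$ with $q_m(T_{k,\lambda}v)\le\rho_{n,m}(v)$ for all $k$, all $v\in M_k$, all $\lambda\in\Lambda^2_n$ -- the point being that $\rho_{n,m}$ does not depend on $k$; from (i), continuous seminorms $\sigma_{n,k,m}$ with $q_m(T_{k,\lambda}w)\le\sigma_{n,k,m}(w)$ for $\lambda\in\Lambda^0_n$; and from (ii), the fact that $\sup_{\lambda\in\Lambda^1_n}q_m(T_{k,\lambda}w)\to0$ as $k\to\infty$ for every $w\in X_{n,0}$.

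At step $s$, having already chosen $k_1<\cdots<k_{s-1}$, ``depths'' $d_1<\cdots<d_{s-1}$ and $u_1,\dots,u_{s-1}$, I would first pick $k_s>k_{s-1}+\phi(k_{s-1})$ large enough that the convergences furnished by (ii) have already taken effect for $u_1,\dots,u_{s-1}$ beyond $k_s$ (this is what turns the whole window $[k_s,k_s+\phi(k_s)]$ into a good window), then pick $d_s>\max\{d_{s-1},k_s+\phi(k_s)\}$, and then, working inside the infinite-dimensional closed subspace $M_{d_s}$, which carries the continuous norm $p_1$, run the standard basic-sequence selection to produce a $p_1$-unit vector $u_s\in M_{d_s}$ extending $u_1,\dots,u_{s-1}$ to a basic sequence whose coordinate functionals are bounded by a fixed multiple of $p_1$; simultaneously -- in the spirit of the construction in Proposition~\ref{X0} -- $u_s$ should be built as a prescribed-smallness perturbation of an element of the dense set $X_{\nu(s),0}$ associated with a level $\nu(s)\to\infty$. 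Setting $M_0=\overline{\text{span}}\{u_s:s\ge1\}$, which is closed and infinite-dimensional, every $x=\sum_s a_s u_s\in M_0$ then has $|a_s|\le C(x)$ for all $s$.

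The verification would go as follows. Fix $\lambda$; since the $(\Lambda^j_n)_n$ are chains there is $n_0$ with $\lambda\in\Lambda^j_{n_0}$ for $j=0,1,2$, hence $\lambda\in\Lambda^j_n$ for all $n\ge n_0$. Fix $x=\sum_s a_s u_s\in M_0$ and $m\ge1$, and take $k\in[k_s,k_s+\phi(k_s)]$ with $s$ large. With $S_P$ the partial-sum projections of the basis, choose $P=P(k)$ maximal with $d_P<k$, so $P(k)\to\infty$ and $P(k)<s$ as $k\to\infty$. Then $x-S_{P(k)}x$ lies in $\overline{\text{span}}\{u_p:p>P(k)\}\subseteq M_{d_{P(k)+1}}\subseteq M_k$ (since $d_{P(k)+1}\ge k$), so hypothesis (iii) yields
\[q_m\bigl(T_{k,\lambda}(x-S_{P(k)}x)\bigr)\le\rho_{n_0,m}\bigl(x-S_{P(k)}x\bigr)\xrightarrow[k\to\infty]{}0,\]
as $S_{P(k)}x\to x$ in $X$ and $\rho_{n_0,m}$ is continuous. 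In the remaining finite sum $\sum_{p\le P(k)}a_p T_{k,\lambda}u_p$, the indices $p$ that are large enough to have $\nu(p)\ge n_0$ are handled uniformly: there $\lambda\in\Lambda^1_{n_0}\subseteq\Lambda^1_{\nu(p)}$ and $k\ge k_s\ge k_{p+1}$, so the $X_{\nu(p),0}$-part of $u_p$ is made negligibly small by (ii) while its error is absorbed using (i) and (iii); together with $|a_p|\le C(x)$ and $P(k)<s$ this sums to at most $C(x)\,s\,2^{-s}\to0$, and the finitely many remaining $p$ form a fixed finite sum that tends to $0$ termwise along $I$. Hence $T_{k,\lambda}x\to0$ as $k\to\infty$, $k\in I$, which is the conclusion.

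The step I expect to be the real obstacle is the construction of the $u_s$: arranging that each one is at the same time \emph{deep} in the chain $(M_k)$ (so that (iii) controls the tails of every $x\in M_0$ uniformly in $k$), \emph{close} to the convergence-dense set of (ii) (so that the ``head'' of $T_{k,\lambda}x$ dies out), and \emph{regular} enough to keep $(u_s)$ a basic sequence with uniformly bounded coordinate functionals -- all while the bookkeeping on the chains $(\Lambda^j_n)$ guarantees that every parameter $\lambda$ eventually lands in each $\Lambda^j_n$ and that every target window $[k_s,k_s+\phi(k_s)]$ is rendered harmless. The continuous-norm hypothesis on $X$ is used precisely to carry out the basic-sequence selection inside the subspaces $M_{d_s}$.
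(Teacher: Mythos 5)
The paper does not actually prove this statement: it is imported verbatim from \cite[Theorem~2.3]{Bes}, so there is no in-paper argument to measure yours against. The closest relatives inside the paper are Propositions~\ref{propX0gen} and~\ref{propMkgen}, and they display precisely the mechanism your sketch is missing.

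Your overall architecture (a basic sequence $u_s$ chosen deep in $M_{d_s}$ using the continuous norm, a head/tail split at the window index, hypothesis (iii) for the tail, hypothesis (ii) for the head, the $k_s$ chosen after $u_1,\dots,u_{s-1}$) is the right one, but the step you yourself flag as ``the real obstacle'' is a genuine gap, and the single-perturbation design you propose cannot close it. You fix $u_p\in M_{d_p}$ once and for all and write $u_p=w_p+e_p$ with $w_p\in X_{\nu(p),0}$, claiming the error is ``absorbed using (i) and (iii)''. For $k$ in a window $[k_s,k_s+\phi(k_s)]$ with $s>p$, neither hypothesis touches $T_{k,\lambda}e_p$: the equicontinuity seminorm furnished by (i) depends on $k$, and at step $p$ only the finitely many $k$ in the windows already constructed are known, so $e_p$ can only be made harmless for \emph{past} windows; and (iii) requires its argument to lie in $M_k$, whereas $e_p=u_p-w_p\notin M_k$ for $k>d_p$ (the approximant $w_p$ has no reason to lie in any $M_d$). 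The same defect undermines your treatment of the finitely many head indices with $\nu(p)<n_0(\lambda)$: condition (ii) at level $\nu(p)$ asserts nothing about $\lambda\notin\Lambda^1_{\nu(p)}$, so ``tends to $0$ termwise along $I$'' is unsupported --- it is exactly the conclusion of the theorem for $x=u_p$. The repair is to refine the approximation at \emph{every} later stage rather than once: take $u_p=v_{p,p}+\sum_{j>p}y_{p,j}$, where $v_{p,p}\in M_{d_p}$ carries the basic-sequence structure, each partial sum $v_{p,j}=v_{p,p}+\sum_{i=p+1}^{j}y_{p,i}$ lies in $X_{j,0}$ (so that (ii), at a level which grows and hence eventually captures every fixed $\lambda$, controls window $j$ and dictates the choice of $k_j$), and each correction $y_{p,j}$ is chosen by density so small that (i) annihilates it on all the finitely many $k$ belonging to windows $1,\dots,j-1$, which \emph{are} known at step $j$. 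Then for $k$ in window $s$ one splits $T_{k,\lambda}x$ into the partial sums $v_{p,s}$ with $p\le s$ (handled by (ii)), the corrections added after stage $s$ (handled by (i)), and the deep parts $v_{p,p}$ with $p>s$ (handled by (iii)). This iterated-correction scheme is exactly what Propositions~\ref{X0}, \ref{propX0gen} and~\ref{propMkgen} carry out concretely, and it is the idea your sketch needs to become a proof.
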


As previously, we will use repeatedly Mergelyan's Theorem in order to construct the set $X_0$ and the sequence $(M_k)$. In view of our operators, we would like to consider the compact sets $D(0,j)\cup\Big[\bigcup_{k=k_0}^{k_1}\bigcup_{a\in \mathbb{T}}D(-ak,j)\Big]$. Unfortunately, these sets have not a connected complement. For this reason, we will work with the following compact sets \[D(0,j)\cup\Big[\bigcup_{k=k_0}^{k_1}\bigcup_{a\in \mathbb{T}\backslash \{e^{i\alpha}:\alpha\in ]-\theta,\theta[\}}D(-ak,j)\Big]\cup \bigcup_{k=k_2}^{k_3}D(-k,j)\] which have a connected complement if $\theta>0$ and $k_0$ and $k_2-k_1$ are sufficiently big
.

\begin{lemma}\label{lem1genbis}
Let $\phi:\mathbb{N}\to \mathbb{N}$ be a map and let $(m_k)$ and $(t_k)$ be two increasing sequences. For every $\theta\in ]0,\pi[$, every $j\ge 1$, every $K\ge 0$, there exist $k_1,k_2\ge K$ such that for every $\varepsilon>0$, every $b>0$, every $f\in H(\mathbb{C})$, there exists a polynomial $P$ such that
\begin{enumerate}
\item $p_j(P)<\varepsilon$;
\item for every $a\in \mathbb{T}\backslash \{e^{i\alpha}:\alpha\in ]-\theta,\theta[\}$, every $k\in [m_{k_1},m_{k_1}+\phi(m_{k_1})]$,
\[p_j(e^{bk}T_a^{k}(P+f))<\varepsilon;\]
\item for every $k\in [t_{k_2},t_{k_2}+\phi(t_{k_2})]$,
\[p_j(e^{bk}T_1^{k}(P+f))<\varepsilon.\]
\end{enumerate}
\end{lemma}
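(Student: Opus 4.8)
The plan is to mimic the proof of Lemma~\ref{lem1} and Lemma~\ref{lem2}, but now the compact set on which we apply Mergelyan's Theorem is the large set described in the paragraph before the statement. First I would choose $k_1 \ge K$ large enough that all the disks $D(-ak,j)$ with $a \in \mathbb{T}\setminus\{e^{i\alpha}: \alpha \in \mathopen]-\theta,\theta\mathclose[\}$ and $k \in [m_{k_1}, m_{k_1}+\phi(m_{k_1})]$ are contained in a closed annular sector that is disjoint from $D(0,j)$ (this needs $m_{k_1}$ to be large compared to $j$, which is possible since $(m_k)$ is increasing); call this sector-neighbourhood $A$. Note $A$ is a connected compact set avoiding the positive real direction near $0$, and more importantly $\mathbb{C}\setminus(D(0,j)\cup A)$ is connected because $A$ does not enclose the origin (the gap $\{e^{i\alpha}:|\alpha|<\theta\}$ keeps it an annular \emph{sector}, not an annulus). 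Then I would choose $k_2 \ge K$ large compared to $k_1$ so that the disks $D(-k,j)$ for $k \in [t_{k_2}, t_{k_2}+\phi(t_{k_2})]$ lie on the positive real axis far out, disjoint from both $D(0,j)$ and $A$; denote their union $B$. The key geometric point is that $K_j := D(0,j) \cup A \cup B$ is a compact set with connected complement: $A$ is a sector not winding around $0$, and $B$ is a string of disjoint disks far away, so nothing traps a bounded component of the complement.

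Next I would define the target function $h$ on $K_j$ by $h = 0$ on $D(0,j)$ and $h = -f$ on $A \cup B$; this is continuous on $K_j$ and holomorphic in its interior (the pieces are pairwise disjoint), so Mergelyan's Theorem produces a polynomial $Q$ with $\sup_{z \in K_j}|h(z) - Q(z)|$ as small as we like. To handle the factors $e^{bk}$, which can be huge, I would first fix, for each relevant $k$, the compact disk on which $T_a^k(P+f)$ or $T_1^k(P+f)$ is evaluated, namely $D(-ak,j)$ or $D(-k,j)$, and note that on those disks $P+f = Q$, so $e^{bk}T_a^k(P+f)(z) = e^{bk}(P+f)(z+ak)$ evaluated at $|z|<j$, i.e. $|P(w)+f(w)| = |Q(w)|$ for $w$ in the corresponding disk of $A \cup B$. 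Since $b$ and the exponent $k$ are not known in advance at the moment we pick $k_1, k_2$, but they \emph{are} known when we invoke Mergelyan's Theorem (the lemma's quantifier order is: $k_1,k_2$ first, then $\varepsilon, b, f$), I can simply demand
\[\sup_{z \in A \cup B}|h(z)-Q(z)| < \varepsilon\, e^{-b(m_{k_1}+\phi(m_{k_1}))}\, e^{-b(t_{k_2}+\phi(t_{k_2}))}\]
and $p_j(Q) < \varepsilon$ (the latter from $h = 0$ on $D(0,j)$); then setting $P = Q$ gives (1) directly and (2),(3) because $e^{bk} \le e^{b(m_{k_1}+\phi(m_{k_1}))}$ resp.\ $e^{b(t_{k_2}+\phi(t_{k_2}))}$ for $k$ in the stated ranges, and $p_j(e^{bk}T_a^k(P+f)) = e^{bk}\sup_{w\in D(-ak,j)}|Q(w)| < \varepsilon$.

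I expect the main obstacle to be the verification that $\mathbb{C} \setminus K_j$ is connected with the quantitative choices of $k_1$ and $k_2$; one has to be careful that the union of disks $\bigcup_a D(-ak,j)$ over a sub-arc of $\mathbb{T}$ and over a band of radii $k \in [m_{k_1}, m_{k_1}+\phi(m_{k_1})]$ really is a ``sector'' that does not pinch shut into a full annulus — this is exactly why the hypothesis $\theta < \pi$ (in fact any $\theta>0$, provided $m_{k_1}$ is large) and the exclusion of a neighbourhood of $1$ is needed. The second delicate point is purely bookkeeping: since the number of integers $k$ in the two ranges is finite (bounded by $\phi(m_{k_1})+1$ and $\phi(t_{k_2})+1$), finitely many Mergelyan estimates suffice and a single $Q$ can meet all of them simultaneously by taking the approximation error below the minimum of the finitely many thresholds; no induction is required here, unlike in Proposition~\ref{X0}. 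Everything else — disjointness of the disks for large indices, the trivial bound $p_j(z^d Q) $ if one wanted a valuation condition (not needed in this lemma) — is routine.
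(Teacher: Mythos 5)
Your proof is essentially the same as the paper's: both apply Mergelyan's Theorem once to the compact set consisting of $D(0,j)$, a band of disks $\bigcup_{k\in[m_{k_1},m_{k_1}+\phi(m_{k_1})]}\bigcup_{a}D(-ak,j)$ over the arc $\mathbb{T}\setminus\{e^{i\alpha}:|\alpha|<\theta\}$, and a string of disks $\bigcup_{k\in[t_{k_2},t_{k_2}+\phi(t_{k_2})]}D(-k,j)$, with the target function $h=0$ on $D(0,j)$ and $h=-f$ elsewhere, choosing $k_1,k_2$ so the complement is connected, and then killing the factor $e^{bk}$ by requesting a Mergelyan error that is exponentially small in $b$ (the paper uses $e^{-bC}$ with $C=\max\{m_{k_1}+\phi(m_{k_1}),t_{k_2}+\phi(t_{k_2})\}$, you use the even smaller product of the two exponentials, which is harmless). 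One small slip: you write ``on those disks $P+f=Q$'' and later $p_j(e^{bk}T_a^k(P+f))=e^{bk}\sup_{w}|Q(w)|$; since you set $P=Q$, on those disks $P+f=Q+f$, and the quantity you actually need and actually control via Mergelyan is $\sup_w|Q(w)+f(w)|=\sup_w|Q(w)-h(w)|$, not $\sup_w|Q(w)|$. The Mergelyan request you wrote is the right one, so this is a notational inconsistency rather than a gap in the argument.
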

\begin{proof}
Let $\Omega_k:=\bigcup_{a\in \mathbb{T}\backslash \{e^{i\alpha}:\alpha\in ]-\theta,\theta[\}}D(-ak,j)$. There exists $k_1\ge K$ such that $D(0,j)\cap \bigcup_{k=m_{k_1}}^{m_{k_1}+\phi(m_{k_1})}\Omega_k=\emptyset$ and such that the complement of $D(0,j)\cup\bigcup_{k=m_{k_1}}^{m_{k_1}+\phi(m_{k_1})}\Omega_k$ is connected. We can also find $k_2\ge K$ such that $\bigcup_{k=t_{k_2}}^{t_{k_2}+\phi(t_{k_2})}D(-k,j)$ is disjoint of $\bigcup_{k=m_{k_1}}^{m_{k_1}+\phi(m_{k_1})}\Omega_k$ and of $D(0,j)$, and such that the complement of $\Omega:=D(0,j)\cup\bigcup_{k=m_{k_1}}^{m_{k_1}+\phi(m_{k_1})}\Omega_k \cup \bigcup_{k=t_{k_2}}^{t_{k_2}+\phi(t_{k_2})}D(-k,j)$ is connected.\\
We define a function $h$ from $\Omega$ to $\mathbb{C}$ by $h(z)=0$ if $z\in D(0,j)$ and $h(z)=-f(z)$ otherwise. Since $h$ is continuous on $\Omega$ and holomorphic on the interior of $\Omega$, we deduce from the Mergelyan's Theorem that there exists a polynomial $P$ such that
\[p_j(P)<\varepsilon\quad\text{and}\quad \sup_{z\in \Omega\backslash D(0,j)}\big|P(z)+f(z)\big|<\frac{\varepsilon}{e^{bC}},\]
where $C=\max\{m_{k_1}+\phi(m_{k_1}),t_{k_2}+\phi(t_{k_2})\}$. We deduce that
\begin{enumerate}
\item for every $a\in \mathbb{T}\backslash \{e^{i\alpha}:\alpha\in ]-\theta,\theta[\}$, every $k\in [m_{k_1},m_{k_1}+\phi(m_{k_1})]$,
\[p_j(e^{bk}T^{k}_a(P+f))\le e^{bC}\sup_{z\in \Omega_k}\big|P(z)+f(z)\big|<\varepsilon;\]
\item for every  $k\in [t_{k_2},t_{k_2}+\phi(t_{k_2})]$,
\[p_j(e^{bk}T^{k}_1(P+f))\le e^{bC}\sup_{z\in D(-k,j)}\big|P(z)+f(z)\big|<\varepsilon;\]
\end{enumerate}
\end{proof}

Given a map $\phi:\mathbb{N}\to \mathbb{N}$, we say that a sequence $(n_k)_{k\ge 1}$ is $\phi$-increasing if for every $k\ge 1$, we have
\[n_{k+1}>n_{k}+\phi(n_k).\]
Thanks to Lemma~\ref{lem1genbis}, we can then prove the following proposition which will allow us to show that the condition (ii) of Theorem~\ref{NiceMn} is satisfied by our family $(T_{k,\lambda})$.

\begin{prop}\label{propX0gen}
Let $\phi:\mathbb{N}\to \mathbb{N}$ and let $(n_k)$, $(m_k)$ and $(t_k)$ be three $\phi$-increasing sequences. There exist a dense subset $X_0$ of $H(\mathbb{C})$ and three increasing sequences $(k_{0,s})$, $(k_{1,s})$ and $(k_{2,s})$ such that for every $b>0$, every $\theta\in ]0,\pi[$, every $x\in X_{0}$,
\begin{enumerate}
\item $e^{bk}D^{k}x \xrightarrow[k\to \infty]{k\in \mathcal{N}} 0$;\\
\item $e^{bk}T^{k}_{a}x\xrightarrow[k\to \infty]{k\in \mathcal{M}} 0$ uniformly on $a\in \mathbb{T}\backslash \{e^{i\alpha}:\alpha\in ]-\theta,\theta[\}$;\\
\item $e^{bk}T^{k}_{1}x\xrightarrow[k\to \infty]{k\in \mathcal{T}} 0$;
\end{enumerate}
where $\mathcal{N}=\bigcup_{s\ge 1}[n_{k_{0,s}},n_{k_{0,s}}+\phi(n_{k_{0,s}})]$,
$\mathcal{M}=\bigcup_{s\ge 1}[m_{k_{1,s}},m_{k_{1,s}}+\phi(m_{k_{1,s}})]$
and $\mathcal{T}=\bigcup_{s\ge 1}[t_{k_{2,s}},t_{k_{2,s}}+\phi(t_{k_{2,s}})]$.
\end{prop}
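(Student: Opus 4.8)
The plan is to follow the scheme of the proof of Proposition~\ref{X0}. We fix a dense sequence of polynomials $(P_{l,l})_{l\ge 0}$ and construct, by induction on the stage $k\ge 1$, polynomials $(P_{l,k})_{l<k}$ together with indices $k_{0,k},k_{1,k},k_{2,k}$, so that $X_0:=\{\sum_{i\ge l}P_{l,i}:l\ge 0\}$ is well-defined, dense in $H(\mathbb{C})$, and satisfies the three stated convergences; the $s$-th terms of $(k_{0,s}),(k_{1,s}),(k_{2,s})$ will be the indices produced at stage $s$, so that $\mathcal{N},\mathcal{M},\mathcal{T}$ are exactly the prescribed unions. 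Throughout, $\nu$ denotes the iterate and $k$ the stage. Since Lemma~\ref{lem1genbis} only handles one pair $(b,\theta)$ at a time, we let the parameters used at stage $k$ be $b_k=k$ and $\theta_k=1/k$: a fixed $b>0$ satisfies $b\le b_k$ and a fixed $\theta\in ]0,\pi[$ satisfies $\theta_k<\theta$ for all large $k$, so for each $(b,\theta)$ only finitely many windows will escape control, which is harmless for a statement about limits along the iterates.

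At stage $k$ we proceed in three steps. First, we choose a large integer $\widehat{J}_k>\widehat{J}_{k-1}$, larger than $k$ and than finitely many quantities coming from Cauchy estimates for the $D^{\nu}$ with $\nu$ in the $\mathcal{N}$-windows already built; we then apply Lemma~\ref{lem1genbis} with $\theta=\theta_k$, with $j=\widehat{J}_k+N_k$ where $N_k$ is the largest endpoint of the $\mathcal{M}$- and $\mathcal{T}$-windows built before stage $k$, and with $K=K_k$ larger than $k_{1,k-1}$ and $k_{2,k-1}$; this produces $k_{1,k},k_{2,k}\ge K_k$. Next, successively for $l=k-1,\dots,0$, we invoke the ``for every $\varepsilon,b,f$'' conclusion of the lemma with $\varepsilon=\varepsilon_k$ (small, to be fixed), $b=b_k$ and $f=\sum_{i=l}^{k-1}P_{l,i}$, obtaining $P_{l,k}$ with $p_{\widehat{J}_k+N_k}(P_{l,k})<\varepsilon_k$ and with both $e^{b_k\nu}T^{\nu}_a\bigl(\sum_{i=l}^{k}P_{l,i}\bigr)$ and $e^{b_k\nu}T^{\nu}_1\bigl(\sum_{i=l}^{k}P_{l,i}\bigr)$ of $p_{\widehat{J}_k+N_k}$-norm $<\varepsilon_k$ on the new $\mathcal{M}$- and $\mathcal{T}$-windows, uniformly on $a\in\mathbb{T}\setminus\{e^{i\alpha}:|\alpha|<\theta_k\}$. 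Finally, after all the $P_{l,k}$ are fixed, we pick $k_{0,k}>k_{0,k-1}$ with $n_{k_{0,k}}$ larger than all previously used $\mathcal{N}$-indices and than $\deg P_{l,j}$ for every $j\le k$ and $l\le j$.

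The reason for using the enlarged index $\widehat{J}_k+N_k$ in the lemma is that, by homogeneity of the norms, the Cauchy estimate $p_j(D^{\nu}P)\le\nu!\,p_{j+1}(P)$, and the crude bound $p_j(T^{\nu}_aP)\le p_{j+\nu}(P)$ (valid for $|a|=1$, uniformly in $a$), the smallness of $P_{l,k}$ in $p_{\widehat{J}_k+N_k}$ forces the finitely many estimates
\[
p_{\widehat{J}_k}\bigl(e^{b_t\nu}D^{\nu}P_{l,k}\bigr)<2^{-k},\qquad p_{\widehat{J}_k}\bigl(e^{b_t\nu}T^{\nu}_aP_{l,k}\bigr)<2^{-k},\qquad p_{\widehat{J}_k}\bigl(e^{b_t\nu}T^{\nu}_1P_{l,k}\bigr)<2^{-k}
\]
for every $l<k$, every $t<k$, every $a\in\mathbb{T}$ and every $\nu$ in the corresponding stage-$t$ window, once $\varepsilon_k<2^{-k}$ is chosen small enough; moreover the choice of $k_{0,k}$ gives $D^{\nu}P_{l,j}=0$ for $\nu$ in the new $\mathcal{N}$-window and every $j\le k$. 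The verification is then routine. Density of $X_0$ and convergence of each series $\sum_{i\ge l}P_{l,i}$ follow from $p_i(P_{l,i})\le p_{\widehat{J}_i}(P_{l,i})<2^{-i}$, exactly as in Proposition~\ref{X0}. For fixed $b>0$, $\theta\in ]0,\pi[$, $x=\sum_{i\ge l}P_{l,i}$ and a norm index $j$, one looks at a window of stage $s\ge\max(l,b,j)$ with $\theta_s<\theta$, splits $x=\sum_{i=l}^{s}P_{l,i}+\sum_{i>s}P_{l,i}$ (on the $\mathcal{N}$-window the first part is annihilated by $D^{\nu}$), bounds the contribution of the first part by the stage-$s$ construction (using $b\le b_s$ and $\mathbb{T}\setminus\{e^{i\alpha}:|\alpha|<\theta\}\subseteq\mathbb{T}\setminus\{e^{i\alpha}:|\alpha|<\theta_s\}$) and that of the tail by $\sum_{i>s}2^{-i}=2^{-s}$ using the stage-$i$ estimates above (and $b\le b_s\le b_i$, $j\le\widehat{J}_i$), so that the $p_j$-norm is $<2^{1-s}\to 0$ along each of $\mathcal{N},\mathcal{M},\mathcal{T}$, uniformly in $a$ for the $\mathcal{M}$-part.

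The main obstacle is the bookkeeping imposed by the two extra parameters $b$ and $\theta$: Lemma~\ref{lem1genbis} handles a single value of each, so one must let $b_k\uparrow\infty$, $\theta_k\downarrow 0$ and accept finitely many uncontrolled windows for each fixed $(b,\theta)$. The second, more technical, difficulty is preventing a polynomial introduced at stage $k$ from destroying the finitely many estimates already secured at the earlier stages; this is handled by running Lemma~\ref{lem1genbis} at stage $k$ with the enlarged norm index $\widehat{J}_k+N_k$, so that smallness of $P_{l,k}$ there dominates $T^{\nu}_aP_{l,k}$ uniformly in $a$ for every $\nu$ in a previously built window, and by fixing the $\mathcal{N}$-index $n_{k_{0,k}}$ only after all the stage-$k$ polynomials are in hand.
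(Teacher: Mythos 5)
Your proposal is correct and follows essentially the same route as the paper's proof: perturb a dense sequence of polynomials inductively via Lemma~\ref{lem1genbis} with stage parameters $b_s=s$, $\theta_s=1/s$, protect the finitely many previously built windows by making each new $P_{l,k}$ small in a sufficiently large norm (your explicit bounds $p_j(T_a^{\nu}P)\le p_{j+\nu}(P)$ and the Cauchy estimate are just a concrete instantiation of the paper's appeal to continuity/equicontinuity), and kill the head of the series on the $\mathcal{N}$-windows by a degree condition. The only deviations are cosmetic — you fix $k_{0,k}$ after the stage-$k$ polynomials so that the new $\mathcal{N}$-window annihilates them, where the paper fixes it first and instead adds a smallness condition on the current window — and your handling of the quantifier order in Lemma~\ref{lem1genbis} (choosing $k_{1,k},k_{2,k}$ once per stage, then looping over $l$) matches the paper's.
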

\begin{proof}
Let $\phi$ be a map from $\mathbb{N}$ to $\mathbb{N}$ and let $(n_k)$, $(m_k)$ and $(t_k)$ be three $\phi$-increasing sequences. We consider a dense sequence of polynomials $(P_{l,l})_{l\ge 0}$ and we construct by induction three increasing sequences $(k_{0,s})$, $(k_{1,s})$ and $(k_{2,s})$ and a family of polynomials $(P_{l,k})_{l<k}$ such that the set $X_0=\{\sum_{i=l}^{\infty}P_{l,i}:l\ge 0\}$ is well-defined and satisfies the required conditions.\\
We start by choosing $n_{k_{0,1}}>\text{deg}(P_{0,0})$ and by using Lemma~\ref{lem1genbis} in order to obtain a polynomial $P_{0,1}$ and two positive integers $k_{1,1}$ and $k_{2,1}$ satisfying 
\begin{itemize}
\item $p_1(P_{0,1})<\frac{1}{2}$;
\item $p_1(e^{k}D^{k}P_{0,1})<\frac{1}{2}$ for every $k\in [n_{k_{0,1}},n_{k_{0,1}}+\phi(n_{k_{0,1}})]$;
\item $p_1(e^{k}T_a^{k}(P_{0,0}+P_{0,1}))<\frac{1}{2}$ for every $k\in [m_{k_{1,1}},m_{k_{1,1}}+\phi(m_{k_{1,1}})]$ and every $a\in \mathbb{T}\backslash \{e^{i\alpha}:\alpha\in ]-1,1[\}$;
\item $p_1(e^{k}T_1^{k}(P_{0,0}+P_{0,1}))<\frac{1}{2}$ for every $k\in [t_{k_{2,1}},t_{k_{2,1}}+\phi(t_{k_{2,1}})]$;
\end{itemize}
More generally, if we assume that the family $(P_{l,j})_{l<j}$ has been constructed for every $j<s$ then we choose $n_{k_{0,s}}>\max\{\text{deg}(P_{l,s-1}):l\le s-1\}$ with $k_{0,s}>k_{0,s-1}$ and we use  Lemma~\ref{lem1genbis} several times in order to obtain polynomials $(P_{l,s})_{l<s}$ and two positive integers $k_{1,s}>k_{1,s-1}$ and $k_{2,s}>k_{2,s-1}$ such that for every $l<s$
\begin{itemize}
\item $p_s(P_{l,s})<\frac{1}{2^s}$;
\item $p_s(e^{sk}D^{k}P_{l,s})<\frac{1}{2^s}$ for every $k\in \bigcup_{j\le s}[n_{k_{0,j}},n_{k_{0,j}}+\phi(n_{k_{0,j}})]$;
\item $p_s(e^{sk}T_a^{k}P_{l,s})<\frac{1}{2^s}$ for every $k\in \bigcup_{j< s}[m_{k_{1,j}},m_{k_{1,j}}+\phi(m_{k_{1,j}})]$ and every $a\in \mathbb{T}$;
\item $p_s(e^{sk}T_1^{k}P_{l,s})<\frac{1}{2^s}$ for every $k\in \bigcup_{j< s}[t_{k_{2,j}},t_{k_{2,j}}+\phi(t_{k_{2,j}})]$;
\item $p_s\Big(e^{sk}T_a^{k}\Big(\sum_{j=l}^sP_{l,j}\Big)\Big)<\frac{1}{2^s}$ for every $k\in [m_{k_{1,s}},m_{k_{1,s}}+\phi(m_{k_{1,s}})]$ and every $a\in \mathbb{T}\backslash \{e^{i\alpha}:\alpha\in ]-\frac{1}{s},\frac{1}{s}[\}$;
\item $p_s\Big(e^{sk}T_1^{k}\Big(\sum_{j=l}^sP_{l,j}\Big)\Big)<\frac{1}{2^s}$ for every $k\in [t_{k_{2,s}},t_{k_{2,s}}+\phi(t_{k_{2,s}})]$
\end{itemize}
Let $X_0:=\{\sum_{i=l}^{\infty}P_{l,i}:l\ge 0\}$. We first remark that each series $\sum_{i=l}^{\infty}P_{l,i}$ is convergent and that $X_0$ is dense since for every $l<i$, $p_i(P_{l,i})<\frac{1}{2^i}$. We can now prove the desired properties:
\begin{enumerate}
\item for every $l,s\ge 1$, every $b>0$, every $j\ge \max\{l,s,b\}$ and every $k\in [n_{k_{0,j}},n_{k_{0,j}}+\phi(n_{k_{0,j}})]$,
\begin{align*}
p_s\Big(e^{bk}D^{k}\Big(\sum_{i=l}^{\infty}P_{l,i}\Big)\Big)&= p_s\Big(e^{bk}D^{k}\Big(\sum_{i=j}^{\infty}P_{l,i}\Big)\Big)\\
&\le \sum_{i=j}^{\infty}p_i(e^{ik}D^{k}P_{l,i})\le \sum_{i=j}^{\infty}\frac{1}{2^i}\to 0;
\end{align*}
\item for every $l,s\ge 1$, every $b>0$, every $\theta\in ]0,\pi[$, every $j\ge \max\{l,s,b,\frac{1}{\theta}\}$, every $k\in [m_{k_{1,j}},m_{k_{1,j}}+\phi(m_{k_{1,j}})]$, every $a\in \mathbb{T}\backslash \{e^{i\alpha}:\alpha\in ]-\theta,\theta[\}$, we have 
\begin{align*}
p_s\Big(e^{bk}T_a^{k}\Big(\sum_{i=l}^{\infty}P_{l,i}\Big)\Big)&\le 
p_j\Big(e^{jk}T_a^{k}\Big(\sum_{i=l}^{j}P_{l,i}\Big)\Big)+ \sum_{i=j+1}^{\infty} p_i(e^{ik}T_a^{k}P_{l,i})\\
&<\frac{1}{2^j}+\sum_{i=j+1}^{\infty}\frac{1}{2^i}\to 0.
\end{align*}
\item for every $l,s\ge 1$, every $b>0$, every $j\ge \max\{l,s,b\}$ and every $k\in [t_{k_{2,j}},t_{k_{2,j}}+\phi(t_{k_{2,j}})]$,
\begin{align*}
p_s\Big(e^{bk}T_1^{k}\Big(\sum_{i=l}^{\infty}P_{l,i}\Big)\Big)&\le 
p_j\Big(e^{jk}T_1^{k}\Big(\sum_{i=l}^{j}P_{l,i}\Big)\Big)+ \sum_{i=j+1}^{\infty} p_i(e^{ik}T_1^{k}P_{l,i})\\
&<\frac{1}{2^j}+\sum_{i=j+1}^{\infty}\frac{1}{2^i}\to 0.
\end{align*}
\end{enumerate}
\end{proof}

We now need to show that the condition (iii) of Theorem~\ref{NiceMn} is also satisfied by our family $(T_{k,\lambda})$. In order to construct the required subspaces $M_k$, a control on the valuation of the polynomials given by Mergelyan's Theorem is necessary. We thus prove the following result.

\begin{lemma}\label{lem2gen} Let $\phi:\mathbb{N}\to \mathbb{N}$ be a map and let $(m_k)$ and $(t_k)$ two $\phi$-increasing sequences. There exist two increasing sequences $(k_{1,s})$ and $(k_{2,s})$ such that for every $b>0$, every $\varepsilon>0$, every $d\ge 0$, every $s\ge 1$, there exists a polynomial $P$ such that
\begin{enumerate}
\item $\text{val}(P)\ge d$ and $p_1(P)\ge \frac{1}{2}$;
\item for every $j\le s$, every $a\in \mathbb{T}\backslash \{e^{i\alpha}:\alpha\in ]-\frac{1}{j},\frac{1}{j}[\}$, every $k\in [m_{k_{1,j}},m_{k_{1,j}}+\phi(m_{k_{1,j}})]$,
\[p_j(e^{bk}T^{k}_aP)<\varepsilon;\]
\item for every $j\le s$, every $k\in [t_{k_{2,j}},t_{k_{2,j}}+\phi(t_{k_{2,j}})]$,
\[p_j(e^{bk}T^{k}_1P)<\varepsilon.\]
\end{enumerate}
\end{lemma}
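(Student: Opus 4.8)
The plan is to carry out, for a single polynomial, the combination of the arguments behind Lemma~\ref{lem2} and Lemma~\ref{lem1genbis}. First I would fix the two sequences $(k_{1,s})$ and $(k_{2,s})$ once and for all by an induction on $s$ that uses only the geometry. For $j\ge 1$ write $I^m_j=[m_{k_{1,j}},m_{k_{1,j}}+\phi(m_{k_{1,j}})]$, $I^t_j=[t_{k_{2,j}},t_{k_{2,j}}+\phi(t_{k_{2,j}})]$ and
\[\Omega_{j,k}=\bigcup_{a\in\mathbb{T}\backslash\{e^{i\alpha}:\alpha\in]-\frac1j,\frac1j[\}}D(-ak,j),\]
and for $s\ge 1$ set $K_s:=D(0,1)\cup\bigcup_{j\le s}\big(\bigcup_{k\in I^m_j}\Omega_{j,k}\cup\bigcup_{k\in I^t_j}D(-k,j)\big)$. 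Having chosen $k_{1,j},k_{2,j}$ for $j<s$, I would pick $k_{1,s}>k_{1,s-1}$ so large that $m_{k_{1,s}}$ is much bigger than $s^2$ and that $m_{k_{1,s}}-s$ exceeds every $|z|$ with $z\in K_{s-1}$, and then $k_{2,s}>k_{2,s-1}$ so large that $t_{k_{2,s}}-s>m_{k_{1,s}}+\phi(m_{k_{1,s}})+s$. As in the proof of Lemma~\ref{lem1genbis}, these choices make $D(0,1)$ disjoint from all the remaining pieces of $K_s$ and, what is the real point, keep the complement of $K_s$ connected: each $\Omega_{j,k}$ is a thin arc-annulus whose angular gap around the negative real axis, of linear width roughly $m_{k_{1,j}}/j-j$, is a genuine channel once $m_{k_{1,j}}$ is large compared with $j^2$; the thickened radial segments $\bigcup_{k\in I^t_j}D(-k,j)$ never disconnect the plane; and the successive annuli are placed farther and farther out with all their channels along the same ray, so the region inside them can always be reached through those channels.

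Next, given $b>0$, $\varepsilon>0$, $d\ge 0$ and $s\ge 1$, I would let $C$ be the largest integer in $\bigcup_{j\le s}(I^m_j\cup I^t_j)$ and apply Mergelyan's Theorem to the function $g$ on $K_s$ that equals $1$ on $D(0,1)$ and $0$ elsewhere --- it is continuous since $D(0,1)$ is isolated from the rest, holomorphic on the interior, and $K_s$ has connected complement --- to obtain a polynomial $Q$ with
\[p_1(Q-1)<\tfrac12\qquad\text{and}\qquad \sup_{z\in K_s\backslash D(0,1)}|Q(z)|<\frac{\varepsilon}{e^{bC}(C+s)^d}.\]
The candidate is then $P(z)=z^dQ(z)$, the $z^d$-trick from Lemma~\ref{lem2}, and the three conclusions follow by inspection. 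Here $\text{val}(P)\ge d$, and, since $|z^dQ(z)|=|Q(z)|$ on $\{|z|=1\}$, one has $p_1(P)=\sup_{|z|=1}|Q(z)|\ge |Q(1)|\ge 1-p_1(Q-1)>\tfrac12$, which is conclusion~(1). For~(2), if $j\le s$, $a\in\mathbb{T}\backslash\{e^{i\alpha}:|\alpha|<\frac1j\}$ and $k\in I^m_j$, then $D(-ak,j)\subset K_s\backslash D(0,1)$, so, exactly as in Lemma~\ref{lem1genbis} and using $|w|\le k+j\le C+s$ for $w\in D(-ak,j)$,
\[p_j(e^{bk}T^k_aP)\le e^{bC}\sup_{w\in D(-ak,j)}|P(w)|\le e^{bC}(C+s)^d\sup_{z\in K_s\backslash D(0,1)}|Q(z)|<\varepsilon;\]
the same estimate with $D(-k,j)$ in place of $D(-ak,j)$ gives~(3).

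The routine part is everything after the choice of the sequences: the Mergelyan step, the bookkeeping of the constant $e^{bC}(C+s)^d$, and the $z^d$-factor forcing the valuation all just reproduce Lemma~\ref{lem2}. The main obstacle --- and the one point deserving care --- is the inductive geometric construction of $(k_{1,s})$ and $(k_{2,s})$: one must verify that infinitely many nested levels of increasingly thick arc-annuli $\Omega_{j,k}$ (widths of order $j$) with increasingly narrow angular gaps (of order $1/j$), interleaved with the radial strips $\bigcup_{k\in I^t_j}D(-k,j)$, can be arranged so that each $K_s$ still has connected complement. This is the natural iteration of the connectedness claim used implicitly in the proof of Lemma~\ref{lem1genbis}, and making it explicit is the heart of the proof.
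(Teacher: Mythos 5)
Your proposal is correct and follows essentially the same route as the paper: an inductive choice of $(k_{1,s})$ and $(k_{2,s})$ making the disk, the arc-annuli and the radial sausages pairwise disjoint with connected complement, then Mergelyan applied to the indicator of $D(0,1)$ with the smallness threshold $\varepsilon/(e^{bC}C^d)$, and finally the multiplication by $z^d$ to force the valuation. The only cosmetic difference is your constant $(C+s)^d$ in place of the paper's $\sup\{|z|:z\in\Omega_s\}^d$, which plays the identical role.
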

\begin{proof}
Let $\Omega_{k,s}=\bigcup_{a\in \mathbb{T}\backslash \{e^{i\alpha}:\alpha\in ]-\frac{1}{s},\frac{1}{s}[\}} D(-ak,s)$. We first remark that we can construct by induction two increasing sequences $(k_{1,s})$ and $(k_{2,s})$ such that the sets $D(0,1)$, $\bigcup_{k=m_{k_{1,s}}}^{m_{k_{1,s}}+\phi(m_{k_{1,s}})}\Omega_{k,s}$ ($s\ge 1$) and $\bigcup_{k=t_{k_{2,s}}}^{t_{k_{2,s}}+\phi(t_{k_{2,s}})}D(-k,s)$ ($s\ge 1$) are pairwise disjoint and such that for every $s\ge 1$, the complement of $\Omega_s:=D(0,1)\cup \bigcup_{j\le s}\bigcup_{k=m_{k_{1,j}}}^{m_{k_{1,j}}+\phi(m_{k_{1,j}})}\Omega_{k,j} \cup \bigcup_{j\le s}\bigcup_{k=t_{k_{2,s}}}^{t_{k_{2,s}}+\phi(t_{k_{2,s}})}D(-k,j)$ is connected.\\
Let $b>0$, $\varepsilon>0$, $d\ge 0$ and $s\ge 1$. We can define a function $h$ from $\Omega_s$ to $\mathbb{C}$ by $h(z)=1$ if $z\in D(0,1)$ and $h(z)=0$ otherwise. By assumption, $h$ is well-defined and since $h$ is continuous on $\Omega_s$ and holomorphic on the interior of $\Omega_s$, we deduce from the Mergelyan's Theorem that there exists a polynomial $Q$ such that
\[p_1(Q-1)<\frac{1}{2}\quad\text{and}\quad \sup_{z\in \Omega_s\backslash D(0,1)}|Q(z)|<\frac{\varepsilon}{e^{bC}C^d},\]
where $C=\sup\{|z|:z\in \Omega_s\}$.\\
We deduce that if we let $P(z)=z^dQ(z)$, then 
\begin{enumerate}
\item $\text{val}(P)\ge d$ and 
\[p_1(P)=\sup_{|z|=1}|z^dQ(z)|=\sup_{|z|=1}|Q(z)|\ge \frac{1}{2};\]
\item for every $j\le s$, every $a\in \mathbb{T}\backslash \{e^{i\alpha}:\alpha\in ]-\frac{1}{j},\frac{1}{j}[\}$, every $k\in [m_{k_{1,j}},m_{k_{1,j}}+\phi(m_{k_{1,j}})]$,
\[p_j(e^{bk}T^{k}_aP)=e^{bk}\sup_{z\in D(-ak,j)}|z^dQ(z)|\le e^{bC}C^d\sup_{z\in \Omega_{k,j}}|Q(z)|<\varepsilon;\]
\item for every $j\le s$, every $k\in [t_{k_{2,j}},t_{k_{2,j}}+\phi(t_{k_{2,j}})]$,
\[p_j(e^{bk}T^{k}_1P)=e^{bk}\sup_{z\in D(-k,j)}|z^dQ(z)|\le e^{bC}C^d\sup_{z\in D(-k,j)}|Q(z)|<\varepsilon.\]
\end{enumerate}
\end{proof}

\begin{prop}\label{propMkgen}
Let $\phi:\mathbb{N}\to \mathbb{N}$ and let $(n_k)$, $(m_k)$ and $(t_k)$ be $\phi$-increasing sequences. There exist increasing sequences $(k_{1,s})$ and $(k_{2,s})$, and a non-increasing sequence of infinite-dimensional closed subspaces $(M_s)$ of $H(\mathbb{C})$ such that for every $b>0$, every $\theta\in ]0,\pi[$, every $j\ge 1$, there exists a continuous seminorm $q$ of $H(\mathbb{C})$ such that for every $a\in \mathbb{T}\backslash \{e^{i\alpha}:\alpha\in ]-\theta,\theta[\}$, every $s\in\mathbb{N}$, every $x\in M_s$, we have
\begin{enumerate}
\item for every $k\in [n_{s},n_{s}+\phi(n_{s})]$, $p_j(e^{bk}D^{k}x) \le q(x)$;
\item for every $k\in [m_{k_{1,s}},m_{k_{1,s}}+\phi(m_{k_{1,s}})]$, $p_j(e^{bk}T^{k}_ax) \le q(x)$;
\item for every $k\in [t_{k_{2,s}},k_{2,s}+\phi(t_{k_{2,s}})]$, $p_j(e^{bk}T^{k}_1x) \le q(x)$.
\end{enumerate} 
\end{prop}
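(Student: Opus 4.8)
The plan is to follow the proof of Proposition~\ref{propMk}: build a single block basis $(P_l)_{l\ge1}$ of polynomials, set $M_s=\overline{\text{span}}\{P_l:l\ge s\}$, and check the three families of inequalities. Two features are new compared to Proposition~\ref{propMk}: the iterates now range over the blocks $[n_s,n_s+\phi(n_s)]$, $[m_{k_{1,s}},m_{k_{1,s}}+\phi(m_{k_{1,s}})]$ and $[t_{k_{2,s}},t_{k_{2,s}}+\phi(t_{k_{2,s}})]$, and the parameters $b>0$ and $\theta\in\, ]0,\pi[$ are fixed only \emph{after} the subspaces $(M_s)$ have been chosen, so the construction must anticipate all of them.

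First I would fix, as in Proposition~\ref{propMk}, the norms $p'_j(\sum x_kz^k)=\sum|x_k|j^k$ and, for each $j\ge0$, constants $C_j>0$ and $l_j\ge1$ with $p_j\le C_jp'_{l_j}$; the sequences $(k_{1,s})$ and $(k_{2,s})$ will be those produced by Lemma~\ref{lem2gen}. Then, starting from $P_0=1$, I would construct inductively polynomials $(P_l)_{l\ge1}$ such that $\text{val}(P_l)>\deg(P_{l-1})$, $p_1(P_l)\ge\frac12$, $\text{val}(P_l)\ge N_l$, and, applying Lemma~\ref{lem2gen} with $s=l$, $b=l$, $d=N_l$ and $\varepsilon=2^{-l}$, one has $p_i(e^{lk}T^k_aP_l)<2^{-l}$ for all $i\le l$, all $a\in\mathbb{T}\setminus\{e^{i\alpha}:|\alpha|<\frac1i\}$ and all $k\in[m_{k_{1,i}},m_{k_{1,i}}+\phi(m_{k_{1,i}})]$, and likewise $p_i(e^{lk}T^k_1P_l)<2^{-l}$ for the corresponding ranges. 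Here $N_l$ is chosen — which is legitimate because $(n_k)$, $\phi$ and $l_1,\dots,l_l$ are already available — so large that $e^{lk}p'_{l_i}(D^kz^n)\le p'_{2l_i}(z^n)$ whenever $n\ge N_l$, $i\le l$ and $k\le n_l+\phi(n_l)$; this rests on the same limit computation as in Proposition~\ref{propMk}, the additional bounded factor $e^{lk}$ being harmless. Since $\text{val}(P_l)>\deg(P_{l-1})$, the $P_l$ have pairwise disjoint monomial supports, hence $(P_l)$ is a basic sequence, the $p'_j$ are additive along its blocks, $M_s=\overline{\text{span}}\{P_l:l\ge s\}$ is a non-increasing sequence of infinite-dimensional closed subspaces, and every $x=\sum_{l\ge s}a_lP_l\in M_s$ satisfies $|a_l|\le2C_1p'_{l_1}(x)$ (using $p_1(P_l)\ge\frac12$).

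Then I would fix $b>0$, $\theta\in\, ]0,\pi[$ and $j\ge1$ and set $s_0:=\lceil\max\{b,j,1/\theta\}\rceil+1$. For $s\ge s_0$ and $x=\sum_{l\ge s}a_lP_l\in M_s$, every index $l$ occurring has $l\ge s\ge s_0$, so $e^{bk}\le e^{lk}$, $p_j\le p_l$, $j\le l$ and $\{e^{i\alpha}:|\alpha|<\frac1s\}\subseteq\{e^{i\alpha}:|\alpha|<\theta\}$; moreover if $k$ lies in the block indexed by $s$ then $k\le n_s+\phi(n_s)<n_{s+1}\le n_l$ for $l>s$, so $k\le n_l+\phi(n_l)$ always, and $k$ lies in the $i=s$ block of Lemma~\ref{lem2gen}. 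Summing the per-block estimates over $l\ge s$ — using $\text{val}(P_l)\ge N_l$ for the derivative part and Lemma~\ref{lem2gen} for the two translation parts — I would get $p_j(e^{bk}D^kx)\le C_jp'_{2l_j}(x)$ and $p_j(e^{bk}T^k_ax),\,p_j(e^{bk}T^k_1x)<2C_1p'_{l_1}(x)$ for every admissible $k$ and every $a\in\mathbb{T}\setminus\{e^{i\alpha}:|\alpha|<\theta\}$. It then remains to absorb the finitely many $s<s_0$: there the exponents in the three blocks form a finite set bounded by some $\hat K$, and the crude bounds $p_j(D^kf)\le k!\,p_{j+1}(f)$ and $p_j(T^k_af)\le p_{j+k}(f)$ (the latter valid for every $a\in\mathbb{T}$) give $p_j(e^{bk}D^kx),\,p_j(e^{bk}T^k_ax),\,p_j(e^{bk}T^k_1x)\le e^{b\hat K}\hat K!\,p_{j+\hat K}(x)$. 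Choosing $q$ to be a continuous seminorm dominating $C_jp'_{2l_j}$, $2C_1p'_{l_1}$ and $e^{b\hat K}\hat K!\,p_{j+\hat K}$ (a large enough multiple of a suitable $p'_N$) then yields (1), (2) and (3) for all $s$.

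The main obstacle is organisational rather than analytic: because $b$ and $\theta$ are quantified after $(M_s)$ is built, one must hard-wire the choice $b=l$ into the construction of $P_l$ and, for given $b$, $\theta$, $j$, separate out the finitely many small indices $s$ on which the summable estimates are not yet available and treat them with the crude bound. One also has to notice that the arc $\{|\alpha|<\frac1s\}$ excluded in the estimate for $P_l$ and the admissible range $[m_{k_{1,s}},\dots]$ of translates are indexed by the \emph{same} $s$, and that both become compatible with the prescribed arc $\{|\alpha|<\theta\}$ exactly once $s\ge1/\theta$.
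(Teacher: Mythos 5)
Your proposal is correct and follows essentially the same route as the paper's proof: construct a block basis $(P_l)$ with valuation at least $N_l$ (to control $e^{bk}D^k$ via the weighted norms $p'_j$), a lower bound $p_1(P_l)\ge\frac12$ (to control coefficients), and Mergelyan-type translation estimates from Lemma~\ref{lem2gen}; set $M_s=\overline{\operatorname{span}}\{P_l:l\ge s\}$; sum per-block estimates for $s$ above a threshold depending on $b,\theta,j$; and absorb the finitely many small $s$ by (equi)continuity. The only cosmetic differences are your use of $p'_{2l_j}$ where the paper uses $p'_{l_j+1}$, and your explicit Cauchy/shift bounds where the paper merely invokes continuity and equicontinuity for the small-$s$ tail.
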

\begin{proof}
We recall that we let $p'_j(f)=\sum_{k=0}^{\infty}|a_k|j^k$ where $f(z)=\sum_{k=0}^{\infty}a_kz^k$ and that for every $j\ge 0$, there exists $C_j> 0$ and $l_j$ such that for every $f\in H(\mathbb{C})$, $p_j(f)\le C_j p'_{l_j}(f)$.

Let $P_0=1$ and let $(k_{1,s})$ and $(k_{2,s})$ be the sequences given by Lemma~\ref{lem2gen}. We then consider a sequence $(P_s)_{s\ge 1}$ of polynomials such that for every $s\ge 1$
\begin{itemize}
\item $\text{val}(P_{s})>\text{deg}(P_{s-1})$ and $p_1(P_s)\ge \frac{1}{2}$;
\item $\text{val}(P_s)\ge N_s$ where $N_s$ satisfies
\[\sup_{n\ge N_s}\max_{k\le n_s+\phi(n_s)}\max_{j\le s}\frac{p'_{l_j}(e^{s(n_s+\phi(n_s))}D^{k}e_n)}{p'_{l_j+1}(e_n)}\le 2.\]
\item $p_j(e^{sk}T_a^{k}P_s)\le \frac{1}{2^s}$ for every $j\le s$, every $a\in \mathbb{T}\backslash \{e^{i\alpha}:\alpha\in ]-\frac{1}{j},\frac{1}{j}[\}$ and every $k\in [m_{k_{1,j}},m_{k_{1,j}}+\phi(m_{k_{1,j}})]$.
\item $p_j(e^{sk}T^{k}_1P_s)<\frac{1}{2^s}$ for every $j\le s$, every $k\in [t_{k_{2,j}},t_{k_{2,j}}+\phi(t_{k_{2,j}})]$.
\end{itemize}
The existence of such a sequence of polynomials follows from the choice of sequences $(k_{1,s})$ and $(k_{2,s})$.\\
Since $\text{val}(P_{k+1})>\text{deg}(P_k)$, the sequence $(P_k)_{k\ge 1}$ is a basic sequence in $H(\mathbb{C})$. We let $M_k=\overline{\text{span}}\{P_l:l \ge k\}$ and we show that this sequence of closed infinite-dimensional subspaces satisfies the required conditions. We first remark that if $x=\sum_{l=k}^{\infty}a_lP_l$ is convergent then the sequence $(a_l)$ is bounded by $2C_1 p'_{l_1}(x)$ since
\[|a_l|\le 2p_1(a_lP_l)\le 2C_1p'_{l_1}(a_lP_l)\le 2C_1 p'_{l_1}(x).\]
\begin{enumerate}
\item Let $b>0$ and $j\ge 1$. 
For every $s\ge \max\{b,j\}$, every $x=\sum_{l=s}^{\infty}a_lP_l\in M_s$, we have for every $k\in [n_s,n_s+\phi(n_s)]$,
\begin{align*}
p_j(e^{b k}D^{k}x)\le C_jp'_{l_j}(e^{bk}D^{k}x)
&\le \sum_{l=s}^{\infty} C_jp'_{l_j}(e^{b(n_s+\phi(n_s))}D^{k}a_lP_l)\\
&\le \sum_{l=s}^{\infty} 2C_jp'_{l_j+1}(a_lP_l)\quad\text{since val}(P_l)\ge N_l\\
&=2C_jp'_{l_j+1}(x).
\end{align*}
and by continuity, there exists a continuous seminorm $q_{0,b,j}$ such that for every $x\in H(\mathbb{C})$, every $s< \max\{b,j\}$, every $k\in [n_s,n_s+\phi(n_s)]$
\[p_j((e^{b k}D^{k}x))\le q_{0,b,j}(x).\]
We conclude that if we let $q=\max\{q_{0,b,j},2C_jp'_{l_j+1}\}$, then we have 
\[p_j(e^{bn_s}D^{n_s}x) \le q(x)\]
for every $s\in\mathbb{N}$, every $x\in M_s$.\\
\item Let $b>0$, $\theta\in ]0,\pi[$ and $j\ge 1$. 
For every $s\ge \max\{b,\frac{1}{\theta},j\}$, every $x=\sum_{l=s}^{\infty}a_lP_l\in M_s$, every $a\in \mathbb{T}\backslash \{e^{i\alpha}:\alpha\in ]-\theta,\theta[\}$, every $k\in [m_{k_{1,s}},m_{k_{1,s}}+\phi(m_{k_{1,s}})]$, we have
\[p_j(e^{bk}T_a^{k}x)\le \sum_{l=s}^{\infty} p_j(e^{bk}T_a^{k}(a_lP_l))\le \sum_{l=s}^{\infty}\frac{|a_l|}{2^l}\le 2C_1 p'_{l_1}(x).\]
Moreover, by equicontinuity, there exists a continuous seminorm $q_{1,b,j}$ such that for every $x\in H(\mathbb{C})$, every $s< \max\{b,\frac{1}{\theta},j\}$, every $a\in \mathbb{T}$, every $k\in [m_{k_{1,s}},m_{k_{1,s}}+\phi(m_{k_{1,s}})]$,
\[p_j(e^{bk}T_a^{k}x)\le q_{1,b,j}(x).\]
We conclude that if we let $q=\max\{q_{1,b,j},2C_1 p'_{l_1}\}$, then we have 
\[p_j(e^{bk}T_a^{k}x) \le q(x)\]
for every $s\in\mathbb{N}$, every $x\in M_s$, every $k\in [m_{k_{1,s}},m_{k_{1,s}}+\phi(m_{k_{1,s}})]$ and every $a\in \mathbb{T}\backslash \{e^{i\alpha}:\alpha\in ]-\theta,\theta[\}$.\\
\item Let $b>0$ and $j\ge 1$. 
For every $s\ge \max\{b,j\}$, every $x=\sum_{l=s}^{\infty}a_lP_l\in M_s$, every $k\in [t_{k_{2,s}},t_{k_{2,s}}+\phi(t_{k_{2,s}})]$, we have
\[p_j(e^{bk}T_1^{k}x)\le \sum_{l=s}^{\infty} p_j(e^{bk}T_1^{k}(a_lP_l))\le \sum_{l=s}^{\infty}\frac{|a_l|}{2^l}\le 2C_1 p'_{l_1}(x).\]
and by continuity, there exists a continuous seminorm $q_{2,b,j}$ such that for every $x\in H(\mathbb{C})$, every $s<\max\{b,j\}$, every $k\in [t_{k_{2,s}},t_{k_{2,s}}+\phi(t_{k_{2,s}})]$,
\[p_j((e^{b k}T_1^{k}x))\le q_{2,b,j}(x).\]
We conclude that if we let $q=\max\{q_{2,b,j},2C_1 p'_{l_1}\}$, then we have 
\[p_j(e^{bk}T_1^{k}x) \le q(x)\]
for every $s\in\mathbb{N}$, every $x\in M_s$, every $k\in [t_{k_{2,s}},t_{k_{2,s}}+\phi(t_{k_{2,s}})]$.
\end{enumerate}
\end{proof}

In summary, by using Propositon~\ref{propX0gen} and Proposition~\ref{propMkgen}, we are now able to show that our family $(T_{k,\lambda})$ satisfies the conditions of Theorem~\ref{NiceMn}. In other words, we can deduce that the condition (iv) of Theorem~\ref{M0gen} is satisfied. Moreover, it follows from Propositon~\ref{propX0gen} that the condition (ii) of Theorem~\ref{M0gen} is also satisfied. Since the equicontinuity will not be difficult to obtain, it us remains to prove that the condition (iii) of Theorem~\ref{M0gen} can be satisfied. This proof will follow from the following two lemmas used by Shkarin~\cite{Shk2} to prove the existence of common hypercyclic vectors for the family $\{\mu T_a\}_{\mu,a\in \mathbb{C}\backslash\{0\}}$

\begin{lemma}[{\cite[Lemma 3.4]{Shk2}}]\label{lemS1}
For each $\delta,C>0$, there is $R>0$ such that for any $n\in \mathbb{N}$, there exists a finite set $S\subset \mathbb{C}$ such that $|z|\in\mathbb{N}$ and $nR+C\le|z|\le (n+1)R-C$ for any $z\in S$, $|z-z'|\ge C$ for every $z,z'\in S$ with $z\ne z'$ and for each $w\in \mathbb{T}$, there exists $z\in S$ such that $|w-\frac{z}{|z|}|<\frac{\delta}{|z|}$.
\end{lemma}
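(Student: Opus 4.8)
The plan is to realise $S$ as a union of points lying on finitely many concentric circles of integer radius. On each such circle I will place equally spaced points, slightly rotating the configuration from one circle to the next so that the set of \emph{arguments} occurring in $S$ becomes as fine as needed, while keeping consecutive radii at distance at least $\lceil C\rceil$ from one another; then points lying on different circles are automatically $C$-separated via the reverse triangle inequality $|z-z'|\ge\bigl||z|-|z'|\bigr|$.

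Concretely, I would first fix the number of circles $L:=\lceil 2\pi C/\delta\rceil+1$, which depends only on $\delta$ and $C$, and then choose $R:=2(L-1)\lceil C\rceil+2C$, which depends only on $L$ and $C$; this $R$ is large enough that for every $n\in\mathbb{N}$ an arithmetic progression $\rho_0,\ \rho_0+\lceil C\rceil,\ \dots,\ \rho_0+(L-1)\lceil C\rceil$ of positive integers lies inside the annulus $nR+C\le|z|\le(n+1)R-C$ with moreover $\rho_0\ge(L-1)\lceil C\rceil$ (for $n\ge1$ take $\rho_0=\lceil nR+C\rceil$; for $n=0$, in case $0\in\mathbb{N}$, push the progression to the outer edge and take $\rho_0=(L-1)\lceil C\rceil$). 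Writing $\rho_c:=\rho_0+c\lceil C\rceil$ and $m:=\lfloor 2\rho_0/C\rfloor$, I would then set
\[S:=\bigl\{\,\rho_c\,e^{2\pi i(k+c/L)/m}\ :\ 0\le c\le L-1,\ 0\le k\le m-1\,\bigr\},\]
so that $S$ is a finite set of $Lm$ points.

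It then remains to verify the three requirements. The modulus condition $|z|=\rho_c\in\mathbb{N}$ and the position inside the annulus hold by construction. For the $C$-separation: two points of $S$ on the same circle are at distance $2\rho_c|\sin(\pi(k-k')/m)|\ge 2\rho_0\sin(\pi/m)\ge C$, because the minimum of $|\sin(\pi j/m)|$ over $1\le|j|\le m-1$ is reached at $|j|=1$ and $\sin(\pi/m)\ge 2/m\ge C/\rho_0$ by the choice of $m$; two points on distinct circles are at distance $\ge\bigl||z|-|z'|\bigr|=\lceil C\rceil\,|c-c'|\ge C$. For the density of directions: as $(c,k)$ runs over $\{0,\dots,L-1\}\times\{0,\dots,m-1\}$, the integer $kL+c$ runs over $\{0,1,\dots,mL-1\}$, so $z/|z|=e^{2\pi i(k+c/L)/m}=e^{2\pi i(kL+c)/(mL)}$ ranges exactly over the $mL$-th roots of unity; hence any $w\in\mathbb{T}$ lies within Euclidean distance $\pi/(mL)$ of some $z/|z|$ with $z\in S$, and one only needs $\pi/(mL)<\delta/\rho_c$. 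Using $\rho_c\le 2\rho_0$ (valid since $\rho_0\ge(L-1)\lceil C\rceil$) and $m\ge\rho_0/C$, this reduces to $L>2\pi C/\delta$, which is exactly our choice of $L$.

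The step I expect to require the most care is the reconciliation of the two opposing constraints. The $C$-separation caps the number of usable points on a single integer circle at roughly $\rho_0/C$, whereas the $\delta/|z|$-density of directions demands on the order of $\rho_0/\delta$ distinct arguments, so one is forced to spread the points over about $C/\delta$ staggered circles; the actual content is that this number of circles — and therefore the common width $R$ of the annuli — can be chosen \emph{independently of $n$}. This goes through because $\rho_c$, $m$ and the needed angular resolution $\delta/\rho_c$ all scale linearly in $\rho_0\approx nR$, so the ratios deciding feasibility are free of $n$. The only point needing separate attention is the behaviour at small $n$ (in particular $n=0$), where the innermost annulus has bounded size and one simply places all $L$ circles near its outer boundary, without affecting the automatically large radii that arise when $n\ge1$.
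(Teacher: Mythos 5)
The paper does not prove this lemma; it is imported verbatim from Shkarin \cite[Lemma 3.4]{Shk2}. Your construction is correct and is essentially the same staggered-circles argument as Shkarin's: the parameter bookkeeping checks out ($L>2\pi C/\delta$ circles, integer radii in arithmetic progression of step $\lceil C\rceil$ inside an annulus of width $R-2C=2(L-1)\lceil C\rceil$, $m=\lfloor 2\rho_0/C\rfloor$ points per circle with arguments offset by $2\pi c/(Lm)$, so that the directions are exactly the $mL$-th roots of unity while $2\rho_0\sin(\pi/m)\ge C$ and $\pi/(mL)<\delta/(2\rho_0)\le\delta/\rho_c$), and you correctly isolate the one point that matters, namely that the number of circles, hence $R$, is independent of $n$ because every quantity scales linearly in $\rho_0$.
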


\begin{lemma}[{\cite[Lemma 3.5]{Shk2}}]\label{lemS2}
Let $f\in H(\mathbb{C})$ and $l\ge 1$. There exist $m(l)\ge l$ $C_l>1$ and $\delta>0$ such that for every $a\in \mathbb{T}$, every $b\in \mathbb{R}$, every $n\in \mathbb{N}$ and every $g\in H(\mathbb{C})$, if $p_{m(l)}(f-e^{bn}T^n_{a}g)<\frac{1}{C_l}$ then 
$p_l(f-e^{cn}T^n_wg)<1$ for every $c\in\mathbb{R}$ and every $w\in \mathbb{T}$ satisfying $|b-c|<\frac{\delta}{n}$ and $|a-w|<\frac{\delta}{n}$.
\end{lemma}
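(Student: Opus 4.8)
The plan is to unwind all the definitions and reduce the statement to elementary estimates for $f$ and $f'$ on a fixed disk, the function $g$ disappearing from the final bookkeeping. Recall that $T_a^ng(z)=g(z+na)$, so the hypothesis $p_{m(l)}(f-e^{bn}T_a^ng)<1/C_l$ says precisely that
\[
|f(u)-e^{bn}g(u+na)|<\tfrac1{C_l}\qquad\text{for every }|u|<m(l),
\]
while the conclusion asks us to control $|f(z)-e^{cn}g(z+nw)|$ for $|z|\le l$. The key idea is the change of variable $z':=z+n(w-a)$, so that $z+nw=z'+na$; since $|w-a|<\delta/n$ we get $|z'|\le l+\delta$, which will be $<m(l)$ once we require $\delta\le\tfrac12$ and set $m(l):=l+1$. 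Applying the hypothesis with $u=z'$ then gives $|f(z')-e^{bn}g(z+nw)|<1/C_l$.

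Next I would use the decomposition
\[
e^{cn}g(z+nw)-f(z)=e^{(c-b)n}\bigl(e^{bn}g(z+nw)-f(z')\bigr)+\bigl(e^{(c-b)n}-1\bigr)f(z')+\bigl(f(z')-f(z)\bigr)
\]
and estimate the three terms for $|z|\le l$. Since $b,c\in\mathbb R$ and $|b-c|<\delta/n$, we have $|(c-b)n|<\delta$, hence $e^{(c-b)n}\le e^{\delta}$ and $|e^{(c-b)n}-1|\le\delta e^{\delta}$, so the first term is at most $e^{\delta}/C_l$. For the second, $|f(z')|\le p_{l+1}(f)$ because $|z'|\le l+\delta<l+1$, whence it is at most $\delta e^{\delta}p_{l+1}(f)$. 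For the third, both $z$ and $z'$ lie in the disk $|\zeta|\le l+\tfrac12$, on which $f'$ is bounded by a constant $M=M(f,l)$, and $|z'-z|=n|w-a|<\delta$, so the mean value inequality along the segment $[z,z']$ gives $|f(z')-f(z)|\le\delta M$.

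Finally I would fix the constants: choose $\delta\in(0,\tfrac12]$ small enough that $e^{\delta}<2$ and $\delta e^{\delta}p_{l+1}(f)+\delta M<\tfrac12$, put $m(l):=l+1$, and take $C_l:=4$ (or any $C_l>2e^{\delta}$) so that $e^{\delta}/C_l<\tfrac12$. Summing the three bounds yields $p_l(f-e^{cn}T_w^ng)<\tfrac12+\tfrac12=1$, as wanted; the degenerate case $n=0$, if it is allowed, is trivial since then $p_l\le p_{m(l)}<1$. I do not expect any serious obstacle: the one step requiring a little care is checking that the disk $\{|v-na|<m(l)\}$ on which $g$ is pinned down by $f$ actually contains every shifted argument $z+nw$ with $|z|\le l$, and this is exactly the inequality $|z'|\le l+\delta<m(l)$ — which is also why $m(l)$ must be taken strictly larger than $l$.
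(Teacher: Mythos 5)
Your proof is correct: the change of variable $z'=z+n(w-a)$, the three-term decomposition, and the choices $m(l)=l+1$, $\delta$ depending only on $p_{l+1}(f)$ and $\sup_{|\zeta|\le l+1/2}|f'(\zeta)|$, $C_l=4$ all check out, and the constants are independent of $g$ as required. The paper itself gives no proof of this lemma (it is imported verbatim as \cite[Lemma 3.5]{Shk2}), so there is nothing internal to compare against; your argument is the standard continuity estimate one expects behind such a statement.
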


\begin{prop}\label{phihyp}
There exists an increasing map $\phi:\mathbb{N}\to\mathbb{N}$ such that if $(n_k)$, $(m_k)$ and $(t_k)$ are $\phi$-increasing sequences, then for every $l,n\ge 1$, every $\varepsilon>0$, every $f\in H(\mathbb{C})$, every $K\ge 1$, there exist $x\in H(\mathbb{C})$ and $k_0,k_1,k_2\ge K$ such that $p_l(x)<\varepsilon$ and such that for every $b\in [-n,n]$, every $a\in \mathbb{T}$,
\begin{enumerate}
\item there exists $s\in [n_{k_0},n_{k_0}+\phi(n_{k_0})]$ such that
\[p_l(e^{bs}D^{s}x-f)<\varepsilon;\]
\item there exists $s\in [m_{k_1},m_{k_1}+\phi(m_{k_1})]$ such that
\[p_l(e^{bs}T^{s}_{a}x-f)<\varepsilon;\]
\item there exists $s\in [t_{k_2},t_{k_2}+\phi(t_{k_2})]$ such that
\[p_l(e^{bs}T^{s}_{1}x-f)<\varepsilon;\]
\end{enumerate}
\end{prop}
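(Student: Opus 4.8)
We build $x$ as a sum of three families of ``bumps'', one per assertion, arranged so as not to interfere. After the harmless rescaling $f\mapsto\varepsilon^{-1}f$, apply Lemma~\ref{lemS2} to $f$ and $l$ to get $m(l)\ge l$, $C_l>1$, $\delta>0$; by that lemma it is enough to produce $x$ with $p_l(x)<\varepsilon$ such that for the translations, for each $(a,b)$ in a suitable $\tfrac{\delta}{s}$-net of $\mathbb{T}\times[-n,n]$ there is a time $s$ in the relevant window with $p_{m(l)}(e^{bs}T_a^sx-f)<\tfrac{\varepsilon}{C_l}$, and likewise for $T_1$. For the operator $D$ no analogue of Lemma~\ref{lemS2} is needed, since $e^{bs}D^sx-f\approx(e^{(b-\beta)s}-1)f$ on $D(0,l)$ whenever $D^sx\approx e^{-\beta s}f$ there; so for $D$ it is enough to treat $b$ in a net of $[-n,n]$ of mesh comparable to $\tfrac1s$.

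For assertion~(2) I would apply Lemma~\ref{lemS1} with this $\delta$ and with $C:=2m(l)+1$, obtaining $R>0$; for each band index $N$ in a window it yields a finite $C$-separated set $S_N\subset\{NR+C\le|z|\le(N+1)R-C\}$ of integer moduli --- so the disks $D(z,m(l))$, $z\in\bigcup_NS_N$, together with $D(0,m(l))$, are pairwise disjoint --- meeting every direction of $\mathbb{T}$ to within $\tfrac{\delta}{|z|}$. Assigning to each $z$ (or, after refining $S_N$, to sub-families of it) a value $\beta\in[-n,n]$, Mergelyan's Theorem on the union of all these disks furnishes a polynomial that is as small as we wish on $D(0,m(l))$ and on every $D(z',m(l))$ with $z'\ne z$, and is $\approx e^{-\beta|z|}f(\,\cdot-z)$ on $D(z,m(l))$; then $e^{b|z|}T_a^{|z|}x\approx f$ near $0$ as soon as $|a-z/|z||<\tfrac{\delta}{|z|}$, the remaining bumps being negligible since their disks avoid $D(z,m(l))$. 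The bands and the attached values $\beta$ are chosen so that the cells $\{|a-z/|z||<\tfrac{\delta}{|z|}\}\times\{|b-\beta|<\tfrac{\delta}{|z|}\}$ cover $\mathbb{T}\times[-n,n]$; this forces $[m_{k_1},m_{k_1}+\phi(m_{k_1})]$ to contain sufficiently many consecutive bands, hence $\phi(m_{k_1})$ to dominate an expression $c(n,l,f)\,m_{k_1}$ --- so $\phi$ is taken to grow fast enough, and once $n,l,f$ are given one picks $k_1\ge K$ with $m_{k_1}$ large. Assertion~(3), whose bumps all lie on the positive real axis, is handled in the same way --- more simply, the direction being fixed, as in the proof of Lemma~\ref{lem1genbis} --- using the sequence $(t_k)$ and disks $D(-k,j)$ disjoint from the preceding ones.

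For assertion~(1) the bumps are polynomials with prescribed degree ranges: the one attached to a multiplier $e^{\beta}$ and a time $s$ is $e^{-\beta s}\sum_{i\le\deg f}\widehat f_i\tfrac{i!}{(i+s)!}z^{i+s}$ (with $f=\sum_{i\ge0}\widehat f_iz^i$), so that $e^{\beta s}D^s$ sends it to $f$ and $e^{bs}D^s$ sends it to $e^{(b-\beta)s}f$, close to $f$ for $|b-\beta|$ of order $\tfrac1s$; this is the construction of~\cite{Bes} for $\{\mu D\}$, which I would follow, choosing degree ranges, times and multipliers inside a window $[n_{k_0},n_{k_0}+\phi(n_{k_0})]$ ($k_0\ge K$ large) so that the multipliers cover $[-n,n]$ while each bump's contribution to the others' equations stays summably small.

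Finally the three families are assembled into one polynomial $x$ with $p_l(x)<\varepsilon$, the point being to make them mutually harmless by a separation of scales: one first fixes the bumps of (2)--(3), getting a polynomial of some degree $d$ supported far from the origin, and then takes $k_0$ with $n_{k_0}>d$, so $D^{n_{k_0}}$ kills all of them; conversely the bumps of~(1), whose coefficients have size $\lesssim e^{n n_{k_0}}/n_{k_0}!$, add a negligible amount to $T_a^sx$ and $T_1^sx$ on $D(0,m(l))$ for the moderate times $s$ occurring in (2)--(3). The delicate part, which I expect to be the main obstacle, is precisely the bookkeeping behind the previous three paragraphs: checking simultaneously that the chosen cells cover $\mathbb{T}\times[-n,n]$ (and $[-n,n]$ for $D$), that the associated disks and degree ranges are pairwise disjoint, that all cross-contributions sum to less than $\varepsilon$, and that the whole construction fits into windows of the form $[\,\cdot\,,\,\cdot+\phi(\,\cdot\,)]$ for one fixed, sufficiently rapidly increasing map~$\phi$.
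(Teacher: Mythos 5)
Your proposal follows essentially the same route as the paper: Shkarin's Lemmas~\ref{lemS1} and~\ref{lemS2} plus Mergelyan on a union of well-separated disks for the translation parts, with the superlinear growth of $\phi$ (the paper phrases it as $\sum_{m=k}^{k+\phi(k)}\frac1m\to\infty$) guaranteeing that each window $[m_{k_1},m_{k_1}+\phi(m_{k_1})]$ contains enough bands for the cells to cover $\mathbb{T}\times[-n,n]$, and decoupling of the derivative part by choosing $k_0$ with $n_{k_0}>\deg P$ so that $D^s$ annihilates the translation bumps. The only cosmetic differences are that the paper imports the derivative part wholesale from the Common Hypercyclicity Criterion for $\{e^{b}D\}_{b\in[-n,n]}$ (extracting the window lengths $\phi_n$ from it) instead of redoing the explicit bump construction, and controls the effect of the derivative perturbation $Q$ on the translation equations by equicontinuity (requiring $p_L(Q)<C$ for a suitable $L\ge l$) rather than by direct coefficient estimates.
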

\begin{proof}

We know that the family $\{e^b D\}_{b\in [-n,n]}$ satisfies the Common Hypercyclicity Criterion for every $n$. In other words, for every $l,n\ge 1$, every $\varepsilon>0$, every $f\in H(\mathbb{C})$, every $N\ge 1$, there exist $x\in H(\mathbb{C})$ and $N_0\ge N$ such that $p_l(x)<\varepsilon$ and such that for every $b\in [-n,n]$, there exists $s\in [N,N_0]$ such that
\[p_l(e^{bs}D^{s}x-f)<\varepsilon;\]
If we consider a dense sequence $(P_n)$ in $H(\mathbb{C})$, we can thus find an increasing sequence $(\phi_n)$ such that for every $n\ge 1$, every $k\le n$, there exists $x\in H(\mathbb{C})$ such that $p_n(x)<\frac{1}{n}$ and such that for every $b\in [-n,n]$, there exists $s\in [n,n+\phi_n]$ such that
\[p_n(e^{bs}D^{s}x-P_k)<\frac{1}{n}.\]
In other words, for every $l,n\ge 1$, every $\varepsilon>0$, every $f\in H(\mathbb{C})$, there exists $M\ge 1$ such that for every $m\ge M$, there exists $x\in H(\mathbb{C})$ such that $p_l(x)<\varepsilon$ and such that for every $b\in [-n,n]$, there exists $s\in [m,m+\phi_m]$ such that
\[p_l(e^{bs}D^{s}x-f)<\varepsilon.\]
Indeed, given $l,n\ge 1$, $\varepsilon>0$ and $f\in H(\mathbb{C})$ it suffices to consider $M\ge \max\{l,n,\frac{2}{\varepsilon},n_f\}$ where $n_f$ satisfies $p_l(f-P_{n_f})<\frac{\varepsilon}{2}$.\\

We consider an increasing map $\phi:\mathbb{N}\to\mathbb{N}$ satisfying $\phi(n)\ge \phi_n$ for every $n$ and ${\sum_{m=k}^{k+\phi(k)}\frac{1}{m}\to \infty}$. Let $(n_k)$, $(m_k)$ and $(t_k)$ be $\phi$-increasing sequences and let $l,n\ge 1$, $\varepsilon>0$, $f\in H(\mathbb{C})$ and $K\ge 1$. We use Lemma~\ref{lemS1} and Lemma~\ref{lemS2} in order to determine a set of pairwise disjoint closed disks allowing us to get the desired approximations for the translation operators.

By Lemma~\ref{lemS2}, we deduce that there exist an integer $m(l)\ge l$ and real numbers $C_l>1$ and $\delta>0$ such that for every $a\in \mathbb{T}$, every $b\in \mathbb{R}$, every $m\in \mathbb{N}$ and every $g\in H(\mathbb{C})$, if $p_{m(l)}(f-e^{bm}T^m_{a}g)<\frac{1}{C_l}$ then 
$p_l(f-e^{cm}T^m_wg)<1$ for every $c\in\mathbb{R}$, $w\in \mathbb{T}$ satisfying $|b-c|<\frac{\delta}{m}$ and $|a-w|<\frac{\delta}{m}$. By using Lemma~\ref{lemS1} with $C=4m(l)$, we then obtain $R\ge 1$ and finite sets $S_m$ such that $|z|\in\mathbb{N}$ and $mR+4m(l)\le|z|\le (m+1)R-4m(l)$ for any $z\in S_m$, $|z-z'|\ge 4m(l)$ for every $z,z'\in S_m$ with $z\ne z'$, and for each $w\in \mathbb{T}$, there exists $z\in S_m$ such that $|w-\frac{z}{|z|}|<\frac{\delta}{|z|}$.

Let $i_k$ and $I_k\in \mathbb{N}$ satisfying
\[
[Ri_k,R(i_k+I_k)[\subset [m_{k},m_{k}+\phi(m_k)] \subset [R(i_k-1),R(i_k+I_k+1)[.
\]
Since $\sum_{m=k}^{k+\phi(k)}\frac{1}{m}\to \infty$, there exists $k_1\ge K$ such that
$\sum_{m=i_{k_1}}^{i_{k_1}+I_{k_1}-1}\frac{\delta R^{-1}}{m+1}> 2n$. Indeed, we have
\begin{align*}
\sum_{m=i_k}^{i_k+I_{k}-1}\frac{\delta R^{-1}}{m+1}&\ge \sum_{m=\lfloor\frac{m_k}{R}\rfloor+2}^{\lceil\frac{m_k+\phi(m_k)}{R}\rceil-1}\frac{\delta R^{-1}}{m}\\
&\ge \sum_{m=\lfloor\frac{m_k}{R}\rfloor}^{\lfloor\frac{m_k}{R}\rfloor+\lceil\frac{\phi(m_k)}{R}\rceil}\frac{\delta R^{-1}}{m}-3\frac{\delta}{m_k-1}\\
&\ge \sum_{m=m_k}^{m_k+\lceil\frac{\phi(m_k)}{R}\rceil}\frac{\delta R^{-1}}{m}-3\frac{\delta}{m_k-1}\\
&\ge \frac{1}{R}\sum_{m=m_k}^{m_k+\phi(m_k)}\frac{\delta R^{-1}}{m}-3\frac{\delta}{m_k-1}\to \infty
\end{align*}
With the same arguments, we obtain the existence of $k_2$, $j_{k_2}$ and $J_{k_2}$ with
$j_{k_2}> i_{k_1}+I_{k_1}$ such that 
\[[Rj_{k_2},R(j_{k_2}+J_{k_2})[\subset [t_{k_2},t_{k_2}+\phi(t_{k_2})]\quad\text{ and }\quad \sum_{m=j_{k_2}}^{j_{k_2}+J_{k_2}-1}\frac{\delta R^{-1}}{m+1}> 2n.\]

Since $\sum_{m=i_{k_1}}^{i_{k_1}+I_{k_1}-1}\frac{\delta R^{-1}}{m+1}> 2n$, we can now pick $b_1,\dots,b_{I_{k_1}}\in [-n,n]$ such that $[-n,n]\subset \bigcup_{k=1}^{I_{k_1}} ]b_k-\frac{\delta R^{-1}}{(i_{k_1}+k)},b_k+\frac{\delta R^{-1}}{(i_{k_1}+k)}[$
 and we can pick $c_1,\dots,c_{J_{k_2}}\in [-n,n]$ such that 
$[-n,n]\subset \bigcup_{k=1}^{J_{k_2}} ]c_k-\frac{\delta R^{-1}}{j_{k_2}+k},c_k+\frac{\delta R^{-1}}{j_{k_2}+k}[$.\\

Let $S=\bigcup_{k=1}^{I_{k_1}}S_{i_{k_1}+k-1}\cup\bigcup_{k=1}^{J_{k_2}} S_{j_{k_2}+k-1}$ and $\Lambda=S\cup\{0\}$. By definition of sets $S_m$, we have $|z-u|\ge 4m(l)$ for every $z,u\in \Lambda$. It follows that $\bigcup_{z\in \Lambda}D(z,m(l))$ is a compact set with connected complement. Let $N=\max\{|z|:z\in \Lambda\}$. We can then deduce from Mergelyan's Theorem that there exists a polynomial $P$ such that $p_{m(l)}(P)<\frac{\varepsilon}{2}$ and such that 
\[p_{m(l)}(T_zP-e^{-b_k|z|}f)<\frac{\varepsilon}{2C_l} e^{-nN}\quad\text{for each $k\in [1,I_{k_1}]$, each $z\in S_{i_{k_1}+k-1}$}\]
and
\[p_{m(l)}(T_zP-e^{-c_k|z|}f)<\frac{\varepsilon}{2C_l} e^{-nN}\quad\text{for each $k\in [1,J_{k_2}]$, each $z\in S_{j_{k_2}+k-1}$}.\]
We deduce that $p_{l}(P)<\frac{\varepsilon}{2}$, 
\[p_{m(l)}(e^{b_k|z|}T^{|z|}_{\frac{z}{|z|}}P-f)<\frac{\varepsilon}{2C_l}\quad\text{for each $k\in [1,I_{k_1}]$, each $z\in S_{i_{k_1}+k-1}$}\]
and
\[p_{m(l)}(e^{c_k|z|}T^{|z|}_{\frac{z}{|z|}}P-f)<\frac{\varepsilon}{2C_l} \quad\text{for each $k\in [1,J_{k_2}]$, each $z\in S_{j_{k_2}+k-1}$}.\]
Let $b\in [-n,n]$ and $a\in \mathbb{T}$. There exists $k\in [1,I_{k_1}]$ such that $|b-b_k|<\frac{\delta R^{-1}}{(i_{k_1}+k)}$. By definition of sets $S_m$, we can also find $z\in S_{i_{k_1}+k-1}$ such that $\big|a-\frac{z}{|z|}\big|<\frac{\delta}{|z|}$. Since $|z|<R(i_{k_1}+k)$, we have $|b-b_k|<\frac{\delta}{|z|}$ and thus by definition of $m(l)$
\[p_l(e^{b|z|}T^{|z|}_a P-f)<\frac{\varepsilon}{2}.\]
In particular, since $|z|\in[R i_k,R(i_k+I_k)[\subset [m_{k_1},m_{k_1}+\phi(m_{k_1})]$, we conclude that for every $b\in [-n,n]$, every $a\in \mathbb{T}$, there exists $s\in [m_{k_1},m_{k_1}+\phi(m_{k_1})]$ such that
\[p_l(e^{bs}T^{s}_{a}P-f)<\frac{\varepsilon}{2}.\]
Similarly, we deduce that for every $b\in [-n,n]$, every $a\in \mathbb{T}$, there exists $s\in [t_{k_2},t_{k_2}+\phi(t_{k_2})]$ such that $p_l(e^{bs}T^{s}_{a}P-f)<\frac{\varepsilon}{2}$ and we have thus in particular
\[p_l(e^{bs}T^{s}_{1}P-f)<\frac{\varepsilon}{2}.\]
Finally, we deduce from the equicontinuity that there exists $L\ge l$ and $C>0$ such that for every polynomial $Q$ if $p_L(Q)<C$ then
for every $b\in [-n,n]$, every $a\in \mathbb{T}$, there exists $s\in [m_{k_1},m_{k_1}+\phi(m_{k_1})]$ such that
\[p_l(e^{bs}T^{s}_{a}(P+Q)-f)<\varepsilon\]
and there exists $s\in [t_{k_2},t_{k_2}+\phi(t_{k_2})]$ such that
\[p_l(e^{bs}T^{s}_{1}(P+Q)-f)<\varepsilon.\]

Moreover, thanks to our choice of $\phi$, we know that there exists $M\ge 1$ such that for every $m\ge M$, there exists $x\in H(\mathbb{C})$ such that $p_l(x)<\varepsilon$, such that $p_L(x)<C$ and such that for every $b\in [-n,n]$, there exists $s\in [m,m+\phi(m)]$ satisfying
\[p_l(e^{bs}D^{s}x-f)<\varepsilon.\]
In particular, if we consider $k_0\ge K$ such that $n_{k_0}\ge \max\{M,\deg{P}\}$, there exists $Q\in H(\mathbb{C})$ such that $p_l(Q)<\varepsilon$, such that $p_L(Q)<C$ and such that for every $b\in [-n,n]$, there exists $s\in [n_{k_0},n_{k_0}+\phi(n_{k_0})]$ such that
\[p_l(e^{bs}D^{s}(P+Q)-f)<\varepsilon\quad \text{since $D^sP=0$.}\]
Since $p_L(Q)<C$, the vector $x:=P+Q$ then satisfies all desired properties.
\end{proof}

We are now able to prove the existence of common hypercyclic subspaces for the family $\{\lambda D\}_{\lambda\in\mathbb{C}\backslash\{0\}}\cup \{\mu T_a\}_{a,\mu\in\mathbb{C}\backslash\{0\}}$.

\begin{proof}[Proof of Theorem~\ref{mainthm}]
Let $\phi:\mathbb{N}\to\mathbb{N}$ be the map given by Proposition~\ref{phihyp}. We use Proposition~\ref{propX0gen} with $\phi$ in order to obtain the existence of a dense subset $X_0$ of $H(\mathbb{C})$ and three increasing sequences $(n_k)$, $(m_k)$ and $(t_k)$ such that for every $b>0$, every $\theta\in ]0,\pi[$, every $x\in X_{0}$,
\begin{enumerate}
\item $e^{bk}D^{k}x \xrightarrow[k\to \infty]{k\in \mathcal{N}} 0$;\\
\item $e^{bk}T^{k}_{a}x\xrightarrow[k\to \infty]{k\in \mathcal{M}} 0$ uniformly on $a\in \mathbb{T}\backslash \{e^{i\alpha}:\alpha\in ]-\theta,\theta[\}$;\\
\item $e^{bk}T^{k}_{1}x\xrightarrow[k\to \infty]{k\in \mathcal{T}} 0$;
\end{enumerate}
where $\mathcal{N}=\bigcup_{k\ge 1}[n_{k},n_{k}+\phi(n_{k})]$,
$\mathcal{M}=\bigcup_{k\ge 1}[m_{k},m_{k}+\phi(m_{k})]$
and $\mathcal{T}=\bigcup_{k\ge 1}[t_{k},t_{k}+\phi(t_{k})]$.

We then use Proposition~\ref{propMkgen} in order to obtain increasing sequences $(k_{1,s})$ and $(k_{2,s})$, and a non-increasing sequence of infinite-dimensional closed subspaces $(M_s)$ of $H(\mathbb{C})$ such that for every $b>0$, every $\theta\in ]0,\pi[$, every $j\ge 1$, there exists a continuous seminorm $q$ of $H(\mathbb{C})$ such that for every $a\in \mathbb{T}\backslash \{e^{i\alpha}:\alpha\in ]-\theta,\theta[\}$, every $s\in\mathbb{N}$, every $x\in M_s$, we have
\begin{enumerate}
\item for every $k\in [n_{s},n_{s}+\phi(n_{s})]$, $p_j(e^{bk}D^{k}x) \le q(x)$;
\item for every $k\in [m_{k_{1,s}},m_{k_{1,s}}+\phi(m_{k_{1,s}})]$, $p_j(e^{bk}T^{k}_ax) \le q(x)$;
\item for every $k\in [t_{k_{2,s}},t_{k_{2,s}}+\phi(t_{k_{2,s}})]$, $p_j(e^{bk}T^{k}_1x) \le q(x)$.
\end{enumerate} 

Let $(\tilde{n}_k)$ be the increasing enumeration of $\bigcup_{s\ge 1}[n_{s},n_{s}+\phi(n_{s})]$, let $(\tilde{m}_k)$ be the increasing enumeration of $\bigcup_{s\ge 1}[m_{k_{1,s}},m_{k_{1,s}}+\phi(m_{k_{1,s}})]$ and let $(\tilde{t}_k)$ be the increasing enumeration of $\bigcup_{s\ge 1}[t_{k_{2,s}},t_{k_{2,s}}+\phi(t_{k_{2,s}})]$. We consider the family $\{(T_{k,\lambda})_{k\ge 1}\}_{\lambda\in \Lambda}$ where 
\begin{itemize}
\item $\Lambda:=\mathbb{R}\cup(\T\times\mathbb{R})$, 
\item $T_{k,b}=e^b D^{\tilde{n}_k}$ for every $b\in \mathbb{R}$,
\item  $T_{k,(a,b)}=e^bT^{\tilde{m}_k}_a$ for every $a\in\T\backslash\{1\}$, every $b\in\R$ 
\item $T_{k,(1,b)}=e^bT^{\tilde{t}_k}_1$ for every $b\in\R$,
\end{itemize}
and we show that each assumption of Theorem~\ref{NiceMn} is satisfied for this family.

\begin{enumerate}
\item[(i)] Since the family $(T_a)_{a\in\mathbb{T}}$ is equicontinuous, we easily deduce that for every $k,n\ge 1$, the family
$
\{  T_{k, \lambda} \}_{\lambda\in [-n,n]\cup (\mathbb{T},[-n,n])} \ \mbox{ is equicontinuous;}
$
\
\item[(ii)]\ Let $\Lambda_n=[-n,n]\cup\big((\mathbb{T}\backslash\{e^{i\theta}:|\theta|<\frac{1}{n}\})\times [-n,n]\big)\cup\big(\{1\}\times [-n,n]\big)$. By definition of $X_0$, we can deduce that for every $n\ge 1$, for every $x\in X_{0}$,
\[
T_{k, \lambda}x \xrightarrow[k\to \infty]{} 0\quad\text{uniformly on $\Lambda_n$;}
\] 
\
\item[(iii)]\ We let  $s_k=\min\{s\in \N: \tilde{n}_k\le n_{s}+\phi(n_s),\ \tilde{m}_k\le m_{k_{1,s}}+\phi(m_{k_{1,s}})\ \text{and}\  \tilde{t}_k\le t_{k_{2,s}}+\phi(t_{k_{2,s}})\}$ and
$\tilde{M}_k:=M_{s_k}$. For each $n\ge 1$, we can then deduce that the family
$\{   (T_{k,\lambda})_{k\ge 1}\}_{\lambda\in \Lambda_n}$
is uniformly equicontinuous along $(\tilde{M}_k)$, i.e.
for every $j\ge 1$, there exists a continuous seminorm $q$ of $X$ so that for every $k$, every $x\in \tilde{M}_k$, 
\[
\mbox{sup}_{\lambda\in\Lambda_n} p_j(T_{k, \lambda}x) \le q(x).
\]
Indeed, for every $n,j\ge 1$, we know that there exists a continuous seminorm $q$ of $H(\mathbb{C})$ such that for every $a\in \mathbb{T}\backslash \{e^{i\alpha}:\alpha\in ]-\frac{1}{n},\frac{1}{n}\}$, every $s\in\mathbb{N}$, every $x\in M_s$, we have
\begin{enumerate}
\item for every $k\in [n_{s},n_{s}+\phi(n_{s})]$, $p_j(e^{nk}D^{k}x) \le q(x)$;
\item for every $k\in [m_{k_{1,s}},m_{k_{1,s}}+\phi(m_{k_{1,s}})]$, $p_j(e^{nk}T^{k}_ax) \le q(x)$;
\item for every $k\in [t_{k_{2,s}},t_{k_{2,s}}+\phi(t_{k_{2,s}})]$, $p_j(e^{nk}T^{k}_1x) \le q(x)$.
\end{enumerate} 
Therefore, for every $k\ge 1$ and every $x\in \tilde{M}_k$, if we assume that $\tilde{n}_k\in [n_{s^{(0)}},n_{s^{(0)}}+\phi(n_{s^{(0)}})]$, that $\tilde{m}_k\in [m_{k_{1,s^{(1)}}},m_{k_{1,s^{(1)}}}+\phi(m_{k_{1,s^{(1)}}})]$ and  that $\tilde{t}_k\in [t_{k_{2,s^{(2)}}},t_{k_{2,s^{(2)}}}+\phi(t_{k_{2,s^{(2)}}})]$, we have $x\in M_{s^{(0)}}\cap M_{s^{(1)}}\cap M_{s^{(1)}}$ and thus
\[
\mbox{sup}_{\lambda\in\Lambda_n} p_j(T_{k, \lambda}x) \le q(x).
\]
\end{enumerate}

We deduce from Theorem~\ref{NiceMn} that for every map $\tilde{\phi}:\mathbb{N}\to \mathbb{N}$, there exist an increasing sequence of integers $(l_i)_{i\ge 1}$ and an infinite-dimensional closed subspace $M_0$ of $H(\mathbb{C})$ such that for any $(x,\lambda)\in M_0\times \Lambda$,
\[T_{k,\lambda}x\xrightarrow[k\to \infty]{k\in I} 0,\]
where $I=\bigcup_{i\ge 1}[l_i,l_i+\tilde{\phi}(l_i)]$. 
In particular, we can choose $\tilde{\phi}$ such that for every increasing sequence of integers $(l_i)_{i\ge 1}$, we have $\{\tilde{n}_k:k\in I\}\supset \bigcup_{s}[n_{K_{0,s}},n_{K_{0,s}}+ \phi(n_{K_{0,s}})]$,
$\{\tilde{m}_k:k\in I\}\supset \bigcup_{s}[m_{K_{1,s}},m_{K_{1,s}}+\phi(m_{K_{1,s}})]$ and $\{\tilde{t}_k:k\in I\}\supset \bigcup_{s}[t_{K_{2,s}},t_{K_{2,s}}+\phi(t_{K_{2,s}})]$ for some increasing sequence $(K_{0,s})$, some subsequence $(K_{1,s})$ of $(k_{1,s})$ and some subsequence $(K_{2,s})$ of $(k_{2,s})$.

It now remains to show that the family $\{(T_{k, \lambda})_{k\in I}\}$ satisfies the assumptions of Theorem~\ref{M0gen}. We have already shown that the family $\{(T_{k, \lambda}x)_{k\ge 1}\}$ satisfies the conditions (i) and (ii). We can thus deduce that the family $\{(T_{k, \lambda}x)_{k\in I}\}$ also satisfies these conditions.  Moreover, the condition (iii) is satisfied for $\Lambda_n=\lambda\in [-n,n]\cup (\mathbb{T},[-n,n])$. This follows from our choice of $\phi$ (coming from Proposition~\ref{phihyp}) and our choice of $\tilde{\phi}$. Finally, by definition of $I$, we also know that the condition (iv) is satisfied. We can thus apply Theorem~\ref{M0gen} to the family $\{(T_{k, \lambda})_{k\in I}\}$ and get the existence of a common hypercyclic subspace for the family $\{\lambda D\}_{\lambda\in\mathbb{C}\backslash\{0\}}\cup \{\mu T_a\}_{a,\mu\in\mathbb{C}\backslash\{0\}}$.
\end{proof}

\end{document}